\theoremstyle{plain} \numberwithin{equation}{section}
\newtheorem{thm}{Theorem}[section]
\newtheorem{theorem}[thm]{Theorem}
\newtheorem{lemma}[thm]{Lemma}
\newtheorem{corollary}[thm]{Corollary}
\newtheorem{definition}[thm]{Definition}
\newtheorem{proposition}[thm]{Proposition}
\begin{document}
\setcounter{page}{1}

\title[Isoclinic relative central extensions of a pair of regular Hom-Lie algebras]{Isoclinic relative central extensions of a pair of regular Hom-Lie algebras}

\author[Padhan]{Rudra Narayan Padhan}
\address{Institute of Technical Education and Research  \\
		Siksha `O' Anusandhan (A Deemed to be University)\\
		Bhubaneswar-751030 \\
		Odisha, India }
\email{ rudra.padhan6@gmail.com, rudranarayanpadhan@soa.ac.in}

\author[Khuntia]{Tofan Kumar Khuntia}
\address{Department of Mathematics, National Institute of Technology  \\
         Rourkela, 
          Odisha-769008 \\
                India}
\email{tofan123khuntia@gmail.com}

\subjclass[2020]{Primary 17B61.}
\keywords{ Hom-Lie algebra; relative central extension; isoclinism }
\maketitle

\begin{abstract}
In this paper, we show the relation among the relative central extensions in an isoclinism family of a particular relative central extension of Hom-Lie algebras. We define the notion of isoclinism on the central relative extensions of a pair of Hom-Lie algebras. Then, we figure out the concept of isomorphism in the equivalence class of isoclinisms on the central relative extensions of a pair of Hom-Lie algebras.
\end{abstract}

\section{\textbf{Introduction and preliminaries}}
The concept of Hom-Lie algebras was first introduced by Hartwig et al. \cite{h2006}, as a part of a study of deformations of the Witt and the Virasoro algebras. A Hom-Lie algebra is basically a non-associative algebra satisfying the skew-symmetry and the Hom-Jacobi identity twisted by a linear map. If we take this linear map as the identity map, then the Hom-Lie algebra becomes a Lie algebra. Therefore, the generalizations of the known theories from Lie algebras to the Hom-Lie play a crucial role as these generalizations are not straight forward. Also, because of the close relation to discrete and deformed vector fields and differential calculus, Hom-Lie algebras are widely studied recently \cite{c2015, c2017, p2020}.

The notion of isoclinism for Lie algebras was first introduced by Moneyhun \cite{km1994} in 1994. Eshrati et al. \cite{es2016} studied various properties of isoclinism in $n$-Lie algebras. Isoclinism of various algebraic structures have been exploid, for more details see \cite{kh2022,m2009, m2013, m2018, sn2020, p2020, rud2022, r2020, sa2021, saf2021,n2022}. Recently in 2020, Arabyani and Sheikh-mohseni  \cite{ar2020} introduced the concept of isoclinism on the relative central extensions of pairs of Lie algebras. Lately, isoclinism has been defined and studied for Hom-Lie algebra by Padhan et al. \cite{p2020}. In this paper, we study the relative central extensions of pairs of Hom-Lie algebras.

Firstly,x we define the notions such as isoclinism between a pair of Hom-Lie algebras, Hom-action and relative central extensions. In Section 2, we introduce the concept of isoclinism on the relative central extensions of a pair of Hom-Lie algebras which is a generalization of the work in \cite{ar2020}. We also give some equivalent conditions under which two relative central extensions are isoclinic. In Section 3, we define the concept of factor sets in the context of relative central extensions of a pair of Hom-Lie algebras. Then we show that under certain conditions, two concepts of isoclinism and isomorphism between the relative central extensions of a pair of finite dimensional Hom-Lie algebras are same. In fact, we show that under some conditions two finite dimensional relative central extensions are isoclinic if and only if they are isomorphic.

\begin{definition}
A Hom-Lie algebra $(L,\alpha_L)$ is a non-associative algebra $L$ together with a bilinear map $[.,.]:L\times L\rightarrow L$ and a linear map $\alpha_L:L\rightarrow L$ satisfying 
\vspace{0.2cm} 

\hspace{1cm} $[x,y]=-[y,x]$ \hfill (skew-symmetry)
\vspace{0.1cm} 

\hspace{1cm} $[\alpha_L(x),[y,z]]+[\alpha_L(y),[z,x]]+[\alpha_L(z),[x,y]]=0$ \hfill (Hom-Jacobi identity)
\vspace{0.2cm} 

for all $x,y,z\in L$.
\end{definition}

A Hom-Lie subalgebra $(H,\alpha_H)$ of a Hom-Lie algebra $(L,\alpha_L)$ is a vector subspace $H$ of $L$ closed under the product, i.e., $[x,y]\in H~\forall ~x,y\in H$, together with the endomorphism $\alpha_H:H\rightarrow H$ being the restriction of $\alpha_L$ on $H$. A Hom-Lie subalgebra $(H,\alpha_H)$ of $(L,\alpha_L)$ is said to be an ideal if $[x,y]\in H~\forall ~x\in H,~y\in L$. The center of a Hom-Lie algebra $(L,\alpha_L)$ is the vector subspace $$Z(L)=\{x\in L~|~[x,y]=0~\forall~y\in L\}.$$
A Hom-Lie algebra $(L,\alpha_L)$ is said to be $multiplicative$ if  $\varphi([l_1,l_2])=[\alpha(l_1),\alpha(l_2)]$ for all $l_1,l_2\in L$. A multiplicative Hom-Lie algebra $(L,\alpha_L)$ is said to be $regular$ if $\alpha$ is bijective.

\begin{definition}
Let $(L_1,\alpha_1)$ and $(L_2,\alpha_2)$ be two Hom-Lie algebras. Then a morphism from $L_1$ to $L_2$ is a linear map $f:L_1\rightarrow L_2$ such that $f([l,m]_{L_1})=[f(l),f(m)]_{L_2}$ for all $l,m\in L_1$ and $f\circ\alpha_{1}=\alpha_{2}\circ f $ i.e., the following diagram commutes;
\vspace{0.0001cm}

 \begin{center}
  \begin{tikzpicture}[>=latex]
\node (x) at (0,0) {\(L_1\)};
\node (z) at (0,-2) {\(L_1\)};
\node (y) at (2.5,0) {\(L_2\)};
\node (w) at (2.5,-2) {\(L_2\)};
\draw[->] (x) -- (y) node[midway,above] {f};
\draw[->] (x) -- (z) node[midway,left] {$\alpha_1$};
\draw[->] (z) -- (w) node[midway,above] {f};
\draw[->] (y) -- (w) node[midway,right] {$\alpha_2$};
\end{tikzpicture}\\
 \end{center} 
\end{definition}

\begin{definition}
Let $(M,\alpha_M)$ be an ideal of a Hom-Lie algebra $(L,\alpha_L)$. Then $(M,L)$ is said to be a pair of Hom-Lie algebras and we may define the commutator as 
$$[M,L]=\langle [m,l]:m\in M,~l\in L\rangle$$
and the centre of $(M,L)$ can be defined as  
 $$Z(M,L)=\{m\in M:[m,l]=0, \forall~l\in L\}.$$
\end{definition}

If $L$ is regular Hom-Lie algebra, then $[M,L]$ and $Z(M,L)$ are ideals of $L$. Now onwards we always assume that the the given Hom-Lie algebra is regular.

\begin{definition}
Let $(M_1,L_1)$ and $(M_2,L_2)$ be two pairs of Hom-Lie algebras. These two pairs are said to be \textit{isoclinic}, denoted by $(M_1,L_1)\sim(M_2,L_2)$, if there exists a pair of Hom-Lie isomorphisms $(\varphi,\theta)$ where, $\varphi:L_1/Z(M_1,L_1)\rightarrow L_2/Z(M_2,L_2)$ is such that $\varphi(\bar{M_1})=\bar{M_2}$, $\bar{M_j}=M_j/Z(M_j,L_j),~j=1,2$ and $\theta:[M_1,L_1]\rightarrow [M_2,L_2]$ satisfies $\theta([m_1,l_1])=[m_2,l_2],$ for all $m_1\in M_1, l_1\in L_1, \bar{m_2}\in \varphi(\bar{m_1}), \bar{l_2}\in \varphi(\bar{l_1})$, and the following diagram is commutative:
\vspace{0.001cm} \\
 \begin{center}
  \begin{tikzpicture}[>=latex]
\node (x) at (0,0) {\(M_1/Z(M_1,L_1)\times L_1/Z(M_1,L_1) \)};
\node (z) at (0,-3) {\(M_2/Z(M_2,L_2)\times L_2/Z(M_2,L_2)\)};
\node (y) at (6,0) {\([M_1,L_1]\)};
\node (w) at (6,-3) {\([M_2,L_2]\)};
\draw[->] (x) -- (y) node[midway,above] {$\mu$};
\draw[->] (x) -- (z) node[midway,right] {$\varphi^{2}$};
\draw[->] (z) -- (w) node[midway,above] {$\rho$};
\draw[->] (y) -- (w) node[midway,right] {$\theta$};
\end{tikzpicture}\\
 \end{center} 
 \end{definition}

Observe that when $M_i = L_i$ for $i=1,2$ on the above definition, we have the definition for isoclinism  of Hom-Lie algebras \cite{p2020}.

\begin{definition}
Let $(L,\alpha_L)$ and $(K,\alpha_K)$ be two Hom-Lie algebras. A Hom-action of $L$ on $K$ is a linear map $L\otimes K\rightarrow K,~x\otimes k\mapsto ~^xk$, satisfying the following properties:
\vspace{0.1cm}
\begin{enumerate}
\item $^{[x,y]}\alpha_K(k)=~^{\alpha_L{(x)}}(^yk)-~^{\alpha_L{(y)}}(^xk)$
\vspace{0.1cm}
\item $^{\alpha_L{(x)}}[k,k^\prime]=[^xk,\alpha_K(k^\prime)]+[\alpha_K(k),~^x(k^\prime)]$ 
\vspace{0.1cm}
\item $\alpha_K(^xk)=~^{\alpha_L{(x)}}\alpha_K(k)$ 
\end{enumerate}
\vspace{0.1cm}
for all $x,y\in L,$ and $k,k^\prime \in K$.
\end{definition}

It is easy to see that if $K$ is an ideal of $L$, then by Lie multiplication we have an action of $L$ on $K$.

\begin{definition}
A relative central extension of a pair of Hom-Lie algebras $(M,L)$ is a homomorphism of Hom-Lie algebras $\sigma:(M^*,\alpha_*)\rightarrow (L,\alpha_L)$ together with an action of $L$ on $M^*$ satisfying the following conditions:
\begin{enumerate}
\item $\sigma(M^*)=M,$
\item $\sigma(^l{\alpha_*(m)})=[\alpha_L(l),\sigma(\alpha_*(m))],~$ for all $l\in L,~m\in M^*$,
\item $^{\sigma(m^\prime)}\alpha_*(m)=[m^\prime,m],~$ for all $m,m^\prime\in M^*$,
\end{enumerate}
\end{definition}

For a given relative central extension $\sigma: M^*\rightarrow L$, the $L$-commutator and the $L$-central subalgebras of $M^*$ are defined respectively, as follows: $$[M^*,L]=\{~^l\alpha_*(m)|~l\in L,m\in M^*\}$$ and $$Z(M^*,L)=\{m\in M^*|~^l\alpha_*(m)=0,\forall~ l\in L\}.$$ 
Observe that $\textnormal{Ker}~\sigma\subseteq Z(M^*,L)$ and $\textnormal{Ker}~\sigma\subseteq Z(M^*)$.

\vspace{0.3cm} 

\section{\textbf{Isoclinic relative extensions}}

In this section, we introduce the concept of isoclinism on the relative central extensions of pairs of Hom-Lie algebras.

\begin{definition}
Let $(M_1,L_1)$ and $(M_2,L_2)$ be two pairs of Hom-Lie algebras. Let there be two Hom-Lie homomorphisms $\gamma:(L_1,\alpha_1)\rightarrow(L_2,\alpha_2)$ and $\beta: (M_1^*,\alpha_1^*)\rightarrow (M_2^*,\alpha_2^*)$ with $\gamma(M_1)=M_2$ and $\beta([M_1^*,L_1])=[M_2^*,L_2]$ such that $\gamma \sigma_1(m_1)=\sigma_2 \beta(m_1)$, where $\sigma_i:(M_i^*,\alpha_i^*)\rightarrow (L_i,\alpha_i),~(i=1,2)$ are relative central extensions of $(M_i,L_i),~(i=1,2)$ respectively. Then the pair $(\gamma,\beta):\sigma_1\rightarrow \sigma_2$ is called a morphism from $\sigma_1$ to $\sigma_2$. If $\gamma$ and $\beta$ are isomorphic then the pair $(\gamma,\beta)$ is called an isomorphism. The isomorphism between $\sigma_1$ and $\sigma_2$ is denoted by $\sigma_1\cong\sigma_2$.
\end{definition}

\begin{definition}
The relative central extensions $\sigma_1:(M_1^*,\alpha_1^*)\rightarrow (L_1,\alpha_1)$ and $\sigma_2:(M_2^*,\alpha_2^*)\rightarrow (L_2,\alpha_2)$ are said to be isoclinic if there exist Hom-Lie isomorphisms $\gamma:(L_1,\alpha_1)\rightarrow(L_2,\alpha_2)$ with $\gamma(M_1)=M_2$ and $\beta^\prime: [M_1^*,L_1]\rightarrow [M_2^*,L_2]$ such that for all $m_1\in M_1^*,l_1\in L_1$ we have $\beta^\prime(^{l_1}\alpha_1^*(m_1))=~ ^{l_2}\alpha_2^*(m_2)$, where $m_2\in M_2^*,l_2\in L_2$ and  $\gamma\sigma_1(\alpha_1^*(m_1))=\sigma_2(\alpha_2^*(m_2)),\gamma\alpha_1(l_1)=\alpha_2(l_2).$ In this case, the pair $(\gamma,\beta^\prime)$ is called an isoclinism from $\sigma_1$ to $\sigma_2$ and we write $(\gamma,\beta^\prime):\sigma_1\cong\sigma_2$. A morphism $(\gamma, \beta):\sigma_1 \rightarrow \sigma_2$ is called isoclinic if the pair $(\gamma,\beta|_{[M_1^*,L_1]})$ is an isoclinism from $\sigma_1$ to $\sigma_2$.
\end{definition}

Suppose that $(M_1,L_1)$ and $(M_2,L_2)$ be two pairs of Hom-Lie algebras and let for $i=1,2,~\overline{M_i}=M_i/Z(M_i,L_i),~\overline{L_i}=L_i/Z(M_i,L_i)$. If $(\gamma,\beta^\prime)$ be an isoclinism between relative central extensions $\overline{\sigma_1}:\overline{M_1}\rightarrow\overline{L_1}$ and $\overline{\sigma_2}:\overline{M_2}\rightarrow\overline{L_2}$, then $\beta^\prime([\overline{m_1},\overline{l_1}])=[\overline{m_2},\overline{l_2}]$, where $\gamma(\overline{l_1})=\overline{l_2}$ and $\gamma(\overline{m_1})=\overline{m_2}$. Now, if we consider $\beta:[M_1,L_1]\rightarrow [M_2,L_2]$, a Hom-Lie isomorphism defined by $\beta([m_1,l_1])=[m_2,l_2]$, then the pair $(\gamma, \beta)$ will be an isoclinism from $(M_1,L_1)$ to $(M_2,L_2)$. Therefore, two pairs of Hom-Lie algebras $(M_1,L_1)$ and $(M_2,L_2)$ are isoclinic if the relative central extensions $\overline{\sigma_1}$ and $\overline{\sigma_2}$ are isoclinic.

Now we give some examples of isoclinic morphisms.

\vspace{0.1cm} 

\noindent{\sc Example 1.} Let $(L,\alpha_L)$ be a Hom-Lie algebra and $A$ be an abelian Hom-Lie subalgebra of $L$, then the maps $\pi_L:L\times A\rightarrow L$ and $i_L:L\rightarrow L\times A$ denote the projective and canonical Hom-Lie homomorphisms, respectively. Let $\sigma:(M^*,\alpha_*)\rightarrow (L,\alpha_L)$ be a relative central extension of the pair $(M,L)$. Then the morphism $\sigma\pi_{M^*}:M^*\times A\rightarrow L$ is a relative central extension of $(M,L)$ together with an action of $L$ on $M^*\times A$ defined by $^l(m,a)=(^lm,a)$. It can be easily verified that $(1_L,\pi_{M^*}):\sigma\pi_{M^*}\rightarrow \sigma$ and $(1_L,i_{M^*}):\sigma\rightarrow \sigma\pi_{M^*}$ are isoclinic epimorphism and isoclinic monomorphism, respectively.

\vspace{0.3cm} 

\noindent{\sc Example 2.} Let $\sigma:(M^*,\alpha_*)\rightarrow (L,\alpha_L)$ be a relative central extension of the pair $(M,L)$ and $N$ be an ideal of $M^*$ such that $N\subseteq \textnormal{Ker}{\sigma}$. Let $\bar{\sigma}:M^*/N\rightarrow L$ be the morphism induced by $\sigma$. This $\bar{\sigma}$ along with an action of $L$ on $M^*/N$ given by $^l(m+N)=(~^lm+N)$ forms a relative central extension of $(M,L)$. Let $\rho_{M^*}:M^*\rightarrow M^*/N$ denote the natural epimorphism. Then we find an isoclinic epimorphism $(1_L,\rho_{M^*}):\sigma\rightarrow\bar{\sigma}$ from $\sigma$ to $\bar{\sigma}$.

\begin{lemma} \label{lem1}
Let $\sigma_1:(M_1^*,\alpha_1^*)\rightarrow (L_1,\alpha_1)$ and $\sigma_2:(M_2^*,\alpha_2^*)\rightarrow (L_2,\alpha_2)$ be relative central extensions of the pairs of Hom-Lie algebras $(M_1,L_1)$ and $(M_2,L_2)$, respectively. If $(\gamma,\beta^\prime):\sigma_1\rightarrow\sigma_2$ is an isoclinism, then
\vspace{0.1cm}
\begin{enumerate}
\item $\gamma\sigma_1(x)=\sigma_2\beta^\prime(x)$, for all $x\in[M_1^*,L_1]$.
\vspace{0.1cm}
\item $\beta^\prime(\textnormal{Ker}\sigma_1\cap[M_1^*,L_1])=\textnormal{Ker}\sigma_2\cap[M_2^*,L_2]$.
\vspace{0.1cm}
\item $\beta^\prime(^{l_1}x)=~^{l_2}\beta^\prime(x)$, for all $x\in [M_1^*,L_1],l_i\in L_i,i=1,2$ with $\gamma\alpha_1(l_1)=\alpha_2(l_2)$.
\end{enumerate}
\end{lemma}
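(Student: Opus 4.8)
The plan is to check each assertion on the spanning elements $^{l}\alpha_*(m)$ of the relative commutator subalgebras and then extend by linearity, using throughout that regularity makes the twisting maps $\alpha_i$ and $\alpha_i^*$ bijective. For part (1) I would take a generator $x={}^{l_1}\alpha_1^*(m_1)\in[M_1^*,L_1]$ and apply condition (2) in the definition of a relative central extension to each side: $\sigma_1(x)=[\alpha_1(l_1),\sigma_1\alpha_1^*(m_1)]$ and $\sigma_2\beta^\prime(x)=\sigma_2({}^{l_2}\alpha_2^*(m_2))=[\alpha_2(l_2),\sigma_2\alpha_2^*(m_2)]$, where $l_2,m_2$ are the data attached to $x$ by the isoclinism. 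Since $\gamma$ preserves brackets and the isoclinism guarantees $\gamma\alpha_1(l_1)=\alpha_2(l_2)$ and $\gamma\sigma_1\alpha_1^*(m_1)=\sigma_2\alpha_2^*(m_2)$, applying $\gamma$ to the first bracket reproduces the second, so $\gamma\sigma_1(x)=\sigma_2\beta^\prime(x)$ on generators and hence on all of $[M_1^*,L_1]$.

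Part (2) would then be a formal consequence of (1). If $x\in\textnormal{Ker}\sigma_1\cap[M_1^*,L_1]$, then $\sigma_2\beta^\prime(x)=\gamma\sigma_1(x)=0$ and $\beta^\prime(x)\in[M_2^*,L_2]$ by hypothesis, giving one inclusion. For the reverse inclusion I would use that $\beta^\prime$ and $\gamma$ are isomorphisms: any $y\in\textnormal{Ker}\sigma_2\cap[M_2^*,L_2]$ equals $\beta^\prime(x)$ for a unique $x\in[M_1^*,L_1]$, and then $\gamma\sigma_1(x)=\sigma_2(y)=0$ forces $\sigma_1(x)=0$ by injectivity of $\gamma$.

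The crux is part (3). The first point to record is that, by bijectivity of $\alpha_1^*$, the element $^{l_1}x={}^{l_1}\alpha_1^*\big((\alpha_1^*)^{-1}(x)\big)$ is again a spanning element of $[M_1^*,L_1]$, so it lies in the domain of $\beta^\prime$ and the defining formula applies: $\beta^\prime({}^{l_1}x)={}^{l_2}\alpha_2^*(n_2)$ for some $n_2\in M_2^*$ with $\gamma\sigma_1(x)=\sigma_2\alpha_2^*(n_2)$. Combining this with part (1) gives $\sigma_2\alpha_2^*(n_2)=\sigma_2\beta^\prime(x)$, so $w:=\alpha_2^*(n_2)-\beta^\prime(x)$ lies in $\textnormal{Ker}\sigma_2$. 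The whole difficulty is now concentrated in showing $^{l_2}w=0$, that is, that the $L_2$-action cannot detect the ambiguity in the value of $\beta^\prime$ on the generator $^{l_1}x$; this is exactly where I expect the main obstacle to be. I would resolve it using the inclusion $\textnormal{Ker}\sigma_2\subseteq Z(M_2^*,L_2)$ recorded just after the definition of a relative central extension, together with the $\alpha_2^*$-invariance of $\textnormal{Ker}\sigma_2$ (which follows from $\sigma_2\alpha_2^*=\alpha_2\sigma_2$ and bijectivity of $\alpha_2$). Writing $w=\alpha_2^*\big((\alpha_2^*)^{-1}(w)\big)$ with $(\alpha_2^*)^{-1}(w)\in\textnormal{Ker}\sigma_2\subseteq Z(M_2^*,L_2)$, the defining property of the relative centre yields $^{l_2}w={}^{l_2}\alpha_2^*\big((\alpha_2^*)^{-1}(w)\big)=0$. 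Hence $^{l_2}\alpha_2^*(n_2)={}^{l_2}\beta^\prime(x)$, and therefore $\beta^\prime({}^{l_1}x)={}^{l_2}\beta^\prime(x)$.

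For bookkeeping I would, where convenient, cancel $\alpha_2$ in $\gamma\alpha_1(l_1)=\alpha_2\gamma(l_1)=\alpha_2(l_2)$ to pass to the cleaner relation $\gamma(l_1)=l_2$ (and similarly for the $M$-components), the map $\gamma$ commuting with the twistings by definition of a morphism. The only genuinely non-formal step is the annihilation argument $^{l_2}w=0$ in part (3); once the generator reduction, the homomorphism property of $\gamma$, and part (1) are in place, parts (1) and (2) are direct verifications.
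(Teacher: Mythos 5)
Your proof is correct and follows essentially the same route as the paper's: part (1) is checked on generators $^{l_1}\alpha_1^*(m_1)$ via condition (2) of a relative central extension and the bracket-compatibility of $\gamma$, part (2) follows from (1) plus bijectivity of $\gamma$ and $\beta^\prime$, and part (3) combines (1) with the defining property of the isoclinism. The only difference is that where the paper dismisses (3) with ``we can clearly have,'' you supply the genuinely needed step it leaves implicit --- namely that the discrepancy $w=\alpha_2^*(n_2)-\beta^\prime(x)\in\textnormal{Ker}\,\sigma_2$ is annihilated by the action, via $\textnormal{Ker}\,\sigma_2\subseteq Z(M_2^*,L_2)$ and the $\alpha_2^*$-invariance of the kernel --- which is a correct and welcome tightening of the paper's one-line assertion.
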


\begin{proof}
\begin{enumerate}
\item Let $x\in [M_1^*,L_1]$ such that $x=~^{l_1}\alpha_1^*(m_1)$, for some $l_1\in L_1$ and $m_1\in M_1^*$. Then, $$\gamma\sigma_1(x)=\gamma\sigma_1(^{l_1}\alpha_1^*(m_1))=\gamma[\alpha_1(l_1),\sigma_1(\alpha_1^*(m_1))]=[\gamma\alpha_1(l_1),\gamma\sigma_1(\alpha_1^*(m_1))].$$ Now if $\gamma\alpha_1(l_1)=\alpha_2(l_2)$ and $\gamma\sigma_1(\alpha_1^*(m_1))=\sigma_2(\alpha_2^*(m_2))$ where $l_2\in L_2$ and $m_2\in M_2^*$, then $$\gamma\sigma_1(x)=[\alpha_2(l_2),\sigma_2(\alpha_2^*(m_2))]=\sigma_2(^{l_2}\alpha_2^*(m_2))=\sigma_2(\beta^\prime(x)).$$
\item Let $x\in \textnormal{Ker}\sigma_1\cap[M_1^*,L_1]$, then $\sigma_1(x)=0$ and $x=~^{l_1}\alpha_1^*(m_1)$, for some $l_1\in L_1,~m_1\in M_1^*$. Suppose that $\gamma\alpha_1(l_1)=\alpha_2(l_2)$ and $\gamma\sigma_1(\alpha_1^*(m_1))=\sigma_2(\alpha_2^*(m_2))$, where $l_2\in L_2$ and $m_2\in M_2^*$. Now from the definition of isoclinism we have $\beta^\prime(^{l_1}\alpha_1^*(m_1))=~^{l_2}\alpha_2^*(m_2)$. \\Also we have, $$\sigma_2(^{l_2}\alpha_2^*(m_2))=[\alpha_2(l_2),\sigma_2(\alpha_2^*(m_2))]=\gamma[\alpha_1(l_1),\sigma_1(\alpha_1^*(m_1))]=\gamma\sigma_1(x)=0.$$ Therefore, $\beta^\prime(x)\in\textnormal{Ker}\sigma_2\cap[M_2^*,L_2]$ and the converse is obvious.
\item From (1), we have $\gamma\sigma_1(x)=\sigma_2(\beta^\prime(x))$. So by the isoclinic properties of the relative central extensions $\sigma_1$, $\sigma_2$ and for all $x\in [M_1^*,L_1], l_1\in L_1,\gamma\alpha_1(l_1)=\alpha_2(l_2)$, we can clearly have $\beta^\prime(^{l_1}x)=~^{l_2}\beta^\prime(x)$.
\end{enumerate}
\end{proof}

Let $(M_1,L_1)$ and $(M_2,L_2)$ be two pairs of Hom-Lie algebras and suppose that $\sigma_1:(M_1^*,\alpha_1^*)\rightarrow (L_1,\alpha_1)$ and $\sigma_2:(M_2^*,\alpha_2^*)\rightarrow (L_2,\alpha_2)$ are relative central extensions of these pairs, respectively. Let $\gamma:(L_1,\alpha_1)\rightarrow (L_2,\alpha_2)$ be an isomorphism such that $\gamma(M_1)=M_2$ and $$\overline{M^*}=\{(m_1,m_2)|m_i\in M_i^*,i=1,2,~\gamma\sigma_1(\alpha_1^*(m_1))=\sigma_2(\alpha_2^*(m_2))\}.$$ Then $\overline{M^*}$ is a subalgebra of $M_1^*\times M_2^*$. Also, the homomorphism
\begin{align*}
 \hat{\sigma}:\overline{M^*}&\longrightarrow L_1 \\(m_1,m_2)&\longmapsto \sigma_1(m_1)
\end{align*}
together with an action of $L_1$ on $\overline{M^*}$ defined by $^{l_1}(m_1,m_2)=(^{l_1}m_1,~^{l_2}m_2),$ where $\gamma\alpha_1(l_1)=\alpha_2(l_2)$ is a relative central extension of the pair $(M_1,L_1)$.

\begin{proposition}\label{pro1}
The isomorphism $\gamma:(L_1,\alpha_1)\rightarrow (L_2,\alpha_2)$ induces an isoclinism from $\sigma_1$ to $\sigma_2$ if and only if there exist isoclinic epimorphisms from $\hat{\sigma}$ onto $\sigma_1$ and $\sigma_2$.
\end{proposition}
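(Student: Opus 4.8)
The plan is to present $\hat\sigma$ as a fibre product and to work with its two coordinate projections $p_1:\overline{M^*}\to M_1^*$, $(m_1,m_2)\mapsto m_1$, and $p_2:\overline{M^*}\to M_2^*$, $(m_1,m_2)\mapsto m_2$. First I would note that, since $\sigma_i\circ\alpha_i^*=\alpha_i\circ\sigma_i$ and $\gamma\circ\alpha_1=\alpha_2\circ\gamma$, bijectivity of $\alpha_2$ (regularity) turns the defining condition of $\overline{M^*}$ into the simpler $\gamma\sigma_1(m_1)=\sigma_2(m_2)$; with this, $(1_{L_1},p_1):\hat\sigma\to\sigma_1$ and $(\gamma,p_2):\hat\sigma\to\sigma_2$ are morphisms of relative central extensions. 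They are epimorphisms: for $m_1\in M_1^*$ one has $\gamma\sigma_1(m_1)\in\gamma(M_1)=M_2=\sigma_2(M_2^*)$, so some $m_2$ gives $(m_1,m_2)\in\overline{M^*}$ and hence $p_1$ is onto, and symmetrically $p_2$ is onto using $\gamma^{-1}(M_2)=M_1$. Because $1_{L_1}(M_1)=M_1$ and $\gamma(M_1)=M_2$, and because the superscript-action compatibility required of an isoclinism is automatic for a coordinate projection, each of these morphisms is an isoclinic epimorphism exactly when its restriction to $[\overline{M^*},L_1]$ is a Hom-Lie isomorphism.

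Next I would record the single computation that drives the proof. A generator of $[\overline{M^*},L_1]$ is ${}^{l_1}(\alpha_1^*(m_1),\alpha_2^*(m_2))=({}^{l_1}\alpha_1^*(m_1),{}^{l_2}\alpha_2^*(m_2))$ with $(m_1,m_2)\in\overline{M^*}$ and $\gamma\alpha_1(l_1)=\alpha_2(l_2)$, so $p_1$ sends it to ${}^{l_1}\alpha_1^*(m_1)$ and $p_2$ to ${}^{l_2}\alpha_2^*(m_2)$. In particular both restrictions $p_1|_{[\overline{M^*},L_1]}$ and $p_2|_{[\overline{M^*},L_1]}$ are surjective. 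Moreover, since $[\overline{M^*},L_1]\subseteq\overline{M^*}\subseteq M_1^*\times M_2^*$, any element killed by both $p_1$ and $p_2$ equals $(0,0)$, so $\ker p_1|_{[\overline{M^*},L_1]}\cap\ker p_2|_{[\overline{M^*},L_1]}=0$.

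For the forward implication, assume $\gamma$ induces an isoclinism $(\gamma,\beta'):\sigma_1\cong\sigma_2$. Matching the defining formula $\beta'({}^{l_1}\alpha_1^*(m_1))={}^{l_2}\alpha_2^*(m_2)$ against the computation above yields the key identity $\beta'\circ p_1|_{[\overline{M^*},L_1]}=p_2|_{[\overline{M^*},L_1]}$, where I use that $\beta'$ is well defined so that its value is independent of the completion $m_2$ chosen for $m_1$. As $\beta'$ is bijective, this identity forces $\ker p_1|_{[\overline{M^*},L_1]}=\ker p_2|_{[\overline{M^*},L_1]}$, and together with the trivial intersection from the previous step both kernels vanish. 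Hence $p_1|_{[\overline{M^*},L_1]}$ and $p_2|_{[\overline{M^*},L_1]}$ are isomorphisms, and $(1_{L_1},p_1)$, $(\gamma,p_2)$ are the required isoclinic epimorphisms.

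For the converse I would run the argument backwards with the coordinate projections as the witnessing epimorphisms (their base maps $1_{L_1}$ and $\gamma$ composing to $\gamma$): assuming they are isoclinic, $p_1|_{[\overline{M^*},L_1]}$ and $p_2|_{[\overline{M^*},L_1]}$ are isomorphisms, so I may set $\beta':=p_2|_{[\overline{M^*},L_1]}\circ\bigl(p_1|_{[\overline{M^*},L_1]}\bigr)^{-1}:[M_1^*,L_1]\to[M_2^*,L_2]$, a Hom-Lie isomorphism. Passing ${}^{l_1}\alpha_1^*(m_1)$ to its preimage ${}^{l_1}(\alpha_1^*(m_1),\alpha_2^*(m_2))$ and then through $p_2$ gives $\beta'({}^{l_1}\alpha_1^*(m_1))={}^{l_2}\alpha_2^*(m_2)$ with $\gamma\sigma_1\alpha_1^*(m_1)=\sigma_2\alpha_2^*(m_2)$ and $\gamma\alpha_1(l_1)=\alpha_2(l_2)$, which is exactly the definition of an isoclinism, so $\gamma$ induces $(\gamma,\beta')$. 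I expect the main obstacle to be the well-definedness bookkeeping behind the identity $\beta'\circ p_1|=p_2|$: a given $m_1$ has many completions $m_2$, differing by $\ker\sigma_2$, so collapsing both kernels must be done through the trivial intersection together with injectivity of $\beta'$ rather than by any naive pointwise formula; the regularity reduction of the defining condition of $\overline{M^*}$ is the other place where the hypotheses are genuinely used.
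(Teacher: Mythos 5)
Your proposal is correct and follows essentially the same route as the paper: both work with the fibre product $\hat\sigma$ and its coordinate projections, and your key identity $\beta'\circ p_1|_{[\overline{M^*},L_1]}=p_2|_{[\overline{M^*},L_1]}$ is exactly the paper's observation that $[\overline{M^*},L_1]$ is the graph $\{(m_1,\beta'(m_1))\mid m_1\in[M_1^*,L_1]\}$. You simply spell out the surjectivity/injectivity and well-definedness checks that the paper dismisses as ``easily checked.''
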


\begin{proof}
The necessary condition holds true obviously. To prove the converse part, let $\gamma$ induces an isoclinism from $\sigma_1$ to $\sigma_2$. Then there exists an isomorphism $\beta^\prime: [M_1^*,L_1]\rightarrow [M_2^*,L_2]$ induced by $\gamma$ such that the pair $(\gamma,\beta^\prime)$ is an isoclinism from $\sigma_1$ to $\sigma_2$. Now according to the hypothesis and by Lemma \ref{lem1} (1) we have $$[\overline{M^*},L_1]=\{(m_1,\beta^\prime(m_1))|m_1\in [M_1^*,L_1]\}.$$ Let $\beta_i:\overline{M^*}\rightarrow M_i^*$ be the projective homomorphisms and let $\gamma_1=1_{L_1},\gamma_2=\gamma$. Now it can be easily checked that $(\beta_1,\gamma_1):\hat{\sigma}\rightarrow \sigma_1$ and $(\beta_2,\gamma_2):\hat{\sigma}\rightarrow\sigma_2$ are isoclinic epimorphisms and the result follows.
\end{proof}

Let $A=\overline{M^*}/[\overline{M^*},L_1]$. Then $A$ is an abelian Hom-Lie algebra. Thus by example 1, for any central relative extension $\sigma:M^*\rightarrow L$ of the pair $(M,L)$, $\sigma{\pi_{M^*}}:M^*\times A\rightarrow L$ is also a relative central extension. The following proposition explains more of this.

\begin{proposition}\label{pro2}
Let $(\gamma, \beta^\prime)$ be an isoclinism from $\sigma_1$ to $\sigma_2$. Then there exist isoclinic monomorphisms from $\hat{\sigma}$ into $\sigma_1\pi_{M_1^*}$ and $\sigma_2\pi_{M_2^*}$.
\end{proposition}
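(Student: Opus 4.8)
The plan is to exhibit the two monomorphisms explicitly, using the coordinate projections out of $\overline{M^*}$ together with the natural quotient onto the abelian factor $A=\overline{M^*}/[\overline{M^*},L_1]$. Write $\beta_i:\overline{M^*}\to M_i^*$ for the two coordinate projections (as in the proof of Proposition \ref{pro1}) and let $\pi_A:\overline{M^*}\to A$ denote the natural epimorphism. I would set
\[
\psi_1:\overline{M^*}\longrightarrow M_1^*\times A,\qquad (m_1,m_2)\longmapsto(m_1,\pi_A(m_1,m_2)),
\]
\[
\psi_2:\overline{M^*}\longrightarrow M_2^*\times A,\qquad (m_1,m_2)\longmapsto(m_2,\pi_A(m_1,m_2)),
\]
and claim that $(1_{L_1},\psi_1):\hat{\sigma}\to\sigma_1\pi_{M_1^*}$ and $(\gamma,\psi_2):\hat{\sigma}\to\sigma_2\pi_{M_2^*}$ are the desired isoclinic monomorphisms.

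First I would check that each $\psi_i$ is a morphism of relative central extensions. Being assembled from the projections $\beta_i$ and the quotient $\pi_A$, all of which are Hom-Lie homomorphisms (the second into the abelian algebra $A$), the maps $\psi_i$ are Hom-Lie homomorphisms commuting with the twist maps. Compatibility with the extensions amounts to $(\sigma_1\pi_{M_1^*})\psi_1=\hat{\sigma}$ and $(\sigma_2\pi_{M_2^*})\psi_2=\gamma\hat{\sigma}$; the first is immediate since both sides send $(m_1,m_2)$ to $\sigma_1(m_1)$, and for the second I would use that $\gamma\sigma_1(m_1)=\sigma_2(m_2)$ for every $(m_1,m_2)\in\overline{M^*}$. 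This identity is not literally the defining relation of $\overline{M^*}$ (which is stated with the twists $\alpha_i^*$), so I would derive it from $\sigma_i\alpha_i^*=\alpha_i\sigma_i$ and $\gamma\alpha_1=\alpha_2\gamma$ and then cancel the bijective map $\alpha_2$.

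The crux is injectivity. For $\psi_1$, suppose $\psi_1(m_1,m_2)=0$; then $m_1=0$ and $(m_1,m_2)\in\ker\pi_A=[\overline{M^*},L_1]$. By the description obtained in the proof of Proposition \ref{pro1}, $[\overline{M^*},L_1]=\{(x,\beta^\prime(x)):x\in[M_1^*,L_1]\}$, so $m_1=0$ forces $x=0$ and hence $m_2=\beta^\prime(0)=0$. The argument for $\psi_2$ is symmetric, now using that $\beta^\prime$ is an isomorphism: from $m_2=0$ and $(m_1,m_2)=(x,\beta^\prime(x))$ we get $\beta^\prime(x)=0$, whence $x=0$ and $m_1=0$. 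The idea is that the abelian factor $A$ is built precisely to record the part of $\overline{M^*}$ annihilated by the coordinate projection, so that the combined map separates points; this is the step I expect to be the main obstacle, since it is where the explicit shape of $[\overline{M^*},L_1]$ and the invertibility of $\beta^\prime$ are both indispensable.

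Finally, to see that the monomorphisms are isoclinic I would restrict to the $L$-commutators. Because the factor $A$ lies in the centre of the extension $\sigma_i\pi_{M_i^*}$, the $L_i$-commutator $[M_i^*\times A,L_i]$ coincides with $[M_i^*,L_i]$, and $\pi_A$ vanishes on $[\overline{M^*},L_1]$; hence $\psi_1$ restricts to $(x,\beta^\prime(x))\mapsto(x,0)$ and $\psi_2$ to $(x,\beta^\prime(x))\mapsto(\beta^\prime(x),0)$. These are exactly the coordinate isomorphisms $[\overline{M^*},L_1]\cong[M_1^*,L_1]$ and $[\overline{M^*},L_1]\cong[M_2^*,L_2]$, so the restrictions are isomorphisms onto the respective target commutators, and the required compatibility with the $L$-actions then follows from Lemma \ref{lem1}(3). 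Thus $(1_{L_1},\psi_1|_{[\overline{M^*},L_1]})$ and $(\gamma,\psi_2|_{[\overline{M^*},L_1]})$ are isoclinisms, which is precisely what it means for the two monomorphisms to be isoclinic.
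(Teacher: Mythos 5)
Your maps $\psi_1,\psi_2$ are exactly the homomorphisms $\overline{\beta_i}(x)=(\beta_i(x),\,x+[\overline{M^*},L_1])$ used in the paper's proof, and your injectivity argument via the description $[\overline{M^*},L_1]=\{(x,\beta^\prime(x)):x\in[M_1^*,L_1]\}$ (with the invertibility of $\beta^\prime$ for the second map) is the same as the paper's, only spelled out in more detail. The proposal is correct and follows essentially the same route; the extra care you take with $\gamma\sigma_1(m_1)=\sigma_2(m_2)$ and with verifying the isoclinic condition on commutators fills in steps the paper leaves as ``easily seen.''
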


\begin{proof}
Let's consider a map $\overline{\beta_i}:\overline{M^*}\rightarrow M_i^*\oplus A$ given by $$\overline{\beta_i}(x)=(\beta_i(x),x+[\overline{M^*},L_1]),$$ for $i=1,2$ and $\beta_i$'s are as defined in Proposition \ref{pro1}. Clearly, $\overline{\beta_i}$ is a Hom-Lie homomorphism. Also we have, $[\overline{M^*},L_1]=\{(m_1,\beta^\prime(m_1))|m_1\in [M_1^*,L_1]\}.$ Now if $x=(m_1,m_2)\in \textnormal{Ker}\overline{\beta_i}$, then $m_i=0$ and $m_2=\beta^\prime(m_1)$, which implies that $\textnormal{Ker}\overline{\beta_i}=0$. Hence $\overline{\beta_i}$ is a Hom-Lie monomorphism and it can be easily seen that the morphisms $(\overline{\beta_1},1_{L_1}):\hat{\sigma}\rightarrow\sigma_1{\pi_{M_1^*}}$ and $(\overline{\beta_2},\gamma):\hat{\sigma}\rightarrow\sigma_2{\pi_{M_2^*}}$ are isoclinic monomorphisms.
\end{proof}

\begin{proposition}\label{pro3}
The isomorphism $\gamma:(L_1,\alpha_1)\rightarrow (L_2,\alpha_2)$ induces an isoclinism from $\sigma_1$ to $\sigma_2$ if and only if for some ideal $T$ of $M_1^*\oplus A$ with $T\subseteq \textnormal{Ker}\sigma_1{\pi_{M_1^*}}$, there exist isoclinic monomorphisms from $\sigma_1$ and $\sigma_2$ into the relative central extension $\overline{\sigma_1{\pi_{M_1^*}}}:(M_1^*\times A)/T\rightarrow L_1$.
\end{proposition}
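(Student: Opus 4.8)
The plan is to prove both implications by assembling the building blocks already in place: Examples 1 and 2, Propositions \ref{pro1} and \ref{pro2}, and Lemma \ref{lem1}, together with the (routine) facts that isoclinism of relative central extensions is transitive and that isoclinic morphisms are stable under composition. The guiding idea is that $\overline{\sigma_1\pi_{M_1^*}}$ should serve as a common extension into which both $\sigma_1$ and $\sigma_2$ embed isoclinically, so that recovering the isoclinism between $\sigma_1$ and $\sigma_2$ becomes a matter of composing one embedding with the inverse of the other and tracking the base isomorphisms.

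For the necessity, I would start from an isoclinism $(\gamma,\beta^\prime)$ from $\sigma_1$ to $\sigma_2$. By Proposition \ref{pro2} the map $\overline{\beta_1}:\overline{M^*}\rightarrow M_1^*\oplus A$, $x\mapsto(\beta_1(x),x+[\overline{M^*},L_1])$, is an isoclinic monomorphism into $\sigma_1\pi_{M_1^*}$, while $\beta_2:\overline{M^*}\rightarrow M_2^*$ is the surjective second projection with $\textnormal{Ker}\,\beta_2=\{(m_1,0):m_1\in\textnormal{Ker}\,\sigma_1\}$ (here bijectivity of $\alpha_1$ and injectivity of $\gamma$ are used). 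The crucial step is the choice
$$T=\overline{\beta_1}(\textnormal{Ker}\,\beta_2)=\{(m_1,(m_1,0)+[\overline{M^*},L_1]):m_1\in\textnormal{Ker}\,\sigma_1\}.$$
I would first verify that this is a legitimate ideal: since $\textnormal{Ker}\,\sigma_1\subseteq Z(M_1^*)$ and $A$ is abelian, every element of $T$ is central in $M_1^*\oplus A$, so $T$ is an ideal; it is $\alpha$-invariant because $\overline{\beta_1}$ is a Hom-homomorphism and $\textnormal{Ker}\,\sigma_1$ is $\alpha$-invariant; and plainly $T\subseteq\textnormal{Ker}\,\sigma_1\pi_{M_1^*}$. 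Hence Example 2 makes $\overline{\sigma_1\pi_{M_1^*}}:(M_1^*\times A)/T\rightarrow L_1$ a relative central extension.

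Next I would produce the two monomorphisms into this common extension. For $\sigma_1$, compose the isoclinic monomorphism $(1_{L_1},i_{M_1^*}):\sigma_1\rightarrow\sigma_1\pi_{M_1^*}$ of Example 1 with the quotient epimorphism; injectivity of $m\mapsto(m,0)+T$ holds because $(m,0)\in T$ forces $(m,0)\in[\overline{M^*},L_1]=\{(n,\beta^\prime(n)):n\in[M_1^*,L_1]\}$, whence $\beta^\prime(n)=0$ and, $\beta^\prime$ being an isomorphism, $m=0$. For $\sigma_2$, I would use $M_2^*\cong\overline{M^*}/\textnormal{Ker}\,\beta_2$ and observe that, since $T=\overline{\beta_1}(\textnormal{Ker}\,\beta_2)$ with $\overline{\beta_1}$ injective, $\overline{\beta_1}$ descends to a monomorphism $\overline{M^*}/\textnormal{Ker}\,\beta_2\rightarrow(M_1^*\times A)/T$, giving a monomorphism $\delta:M_2^*\rightarrow(M_1^*\times A)/T$ taken over the base isomorphism $\gamma^{-1}:L_2\rightarrow L_1$. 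The compatibility $\gamma^{-1}\sigma_2=\overline{\sigma_1\pi_{M_1^*}}\circ\delta$ reduces, after lifting $m_2$ to some $(m_1,m_2)\in\overline{M^*}$, to $\gamma\sigma_1(m_1)=\sigma_2(m_2)$, which follows from the defining relation of $\overline{M^*}$ and bijectivity of $\alpha_2$. That both maps are isoclinic I would read off from Lemma \ref{lem1} and the isoclinic structure of $\overline{\beta_i}$ furnished by Proposition \ref{pro2}.

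For the sufficiency, given isoclinic monomorphisms from $\sigma_1$ and from $\sigma_2$ into $\overline{\sigma_1\pi_{M_1^*}}$ with base maps $1_{L_1}$ and $\gamma^{-1}$, transitivity of isoclinism yields that $\sigma_1$ is isoclinic to $\overline{\sigma_1\pi_{M_1^*}}$, which in turn is isoclinic to $\sigma_2$; the resulting base isomorphism $L_1\rightarrow L_2$ is the composite of $1_{L_1}$ with the inverse of $\gamma^{-1}$, namely $\gamma$, so $\gamma$ induces an isoclinism from $\sigma_1$ to $\sigma_2$. I expect the real work to sit entirely in the necessity, and specifically in the bookkeeping of the Hom-structure through the quotient by $T$ and through the base change $\gamma^{-1}$: one must confirm that $\delta$ is a genuine Hom-homomorphism intertwining the twisted $L_1$- and $L_2$-actions, i.e.\ conditions (1)--(3) of the Hom-action, which is exactly where $\gamma\alpha_1=\alpha_2\gamma$ and regularity of the $\alpha$'s are indispensable. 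By contrast, the well-definedness and injectivity of $\delta$ are comparatively painless, following immediately from the single identity $T=\overline{\beta_1}(\textnormal{Ker}\,\beta_2)$ and the injectivity of $\overline{\beta_1}$.
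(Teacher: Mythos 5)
Your proposal is correct and follows essentially the same route as the paper: you choose the very same ideal $T=\{(x,(x,0)+[\overline{M^*},L_1]) : x\in\textnormal{Ker}\,\sigma_1\}$ and your two monomorphisms coincide with the paper's $\delta_1(m_1)=(m_1,[\overline{M^*},L_1])+T$ and $\delta_2(m_2)=(m_1,(m_1,m_2)+[\overline{M^*},L_1])+T$, merely repackaged as the quotient-composed inclusion and the descent of $\overline{\beta_1}$ along $M_2^*\cong\overline{M^*}/\textnormal{Ker}\,\beta_2$. The identification $T=\overline{\beta_1}(\textnormal{Ker}\,\beta_2)$ is a nice conceptual gloss on the same construction, and your injectivity and compatibility checks match the paper's.
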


\begin{proof}
The necessary condition is obvious. To prove the converse part, let's suppose that $\gamma $ induces an isoclinism $(\gamma, \beta^\prime)$ from $\sigma_1$ to $\sigma_2$. Let $T=\{(x,(x,0)+[\overline{M^*},L_1])|x\in\textnormal{Ker}\sigma_1\}$. Then clearly $T$ is an ideal of $M_1^*\times A$ with $T\subseteq \textnormal{Ker}\sigma_1{\pi_{M_1^*}}$. \\
\vspace{0.00001cm}
Now, we define two Hom-Lie homomorphisms $\delta_1:M_1^*\rightarrow (M_1^*\times A)/T$ and $\delta_2: M_2^*\rightarrow (M_1^*\times A)/T$ as follows: $$\delta_1(m_1)=(m_1,[\overline{M^*},L_1])+T$$ and $$\delta_2(m_2)=(m_1,(m_1,m_2)+[\overline{M^*},L_1])+T,$$ where $m_i\in M_i^*~(i=1,2)$ and $\gamma\sigma_1(\alpha_1^*(m_1))=\sigma_2(\alpha_2^*(m_2))$. We claim that $\delta_1$ and $\delta_2$ are monomorphisms. If $m_1\in \textnormal{Ker}\delta_1$, then $m_1\in \textnormal{Ker}\sigma_1$ and $\beta^\prime(m_1)=0$ and thereby $m_1=0$. Similarly, $\textnormal{Ker}\delta_2=0$. Now it can be easily checked that the morphisms $(\delta_1,1_{L_1}):\sigma_1\rightarrow\overline{\sigma_1{\pi_{M_1^*}}}$ and $(\delta_2, \gamma^{-1}):\sigma_2\rightarrow\overline{\sigma_1{\pi_{M_1^*}}}$ are isoclinic monomorphisms, as required.
\end{proof}

\section{\textbf{Factor set on an isoclinic relative extension}}


\begin{definition}
Let $(M,L)$ be a pair of Hom-Lie algebras. A relative central extension $\sigma:(M^*,\alpha_*)\rightarrow (L,\alpha_L)$ is said to be \textit{finite dimensional} , if $(M^*,\alpha_*)$ is a finite dimensional Hom-Lie algebra.
\end{definition}

\begin{definition}
Let $\sigma:(M^*,\alpha_*)\rightarrow (L,\alpha_L)$ be a relative central extension of a pair of Hom-Lie algebras $(M,L)$. A \textit{factor set} on $\sigma$ is defined as a skew-bilinear map $$f:(L,\alpha_L)\times (L,\alpha_L)\rightarrow\textnormal{Ker}(\sigma)$$ such that the following conditions are satisfied:
\begin{enumerate}
\item $f(l_1,l_1)=0$
\vspace{0.1cm} 
\item $f([{l_1},{l_2}],\alpha_L({l_3}))+f([{l_2},{l_3}],\alpha_L({l_1}))+f([{l_3},{l_1}],\alpha_L({l_2}))=0$
\end{enumerate}
\vspace{0.1cm} 
for all ${l_1},{l_2},{l_3}\in L$.
\end{definition}

\vspace{0.3cm} 
\begin{lemma}\label{lem101}
Let $f$ be a factor set on the relative central extension $\sigma$ of the pair $(M,L)$. Then the product $\textnormal{Ker}\sigma\times M$ is a Hom-Lie algebra with the bracket defined as, $$[(x_1,m_1),(x_2,m_2)]=(f(m_1,m_2),[m_1,m_2]),$$ along with a linear map ${\alpha_f}:\textnormal{Ker}\sigma\times M\rightarrow\textnormal{Ker}\sigma\times M$ such that ${\alpha_f}(x,m)=(\alpha_*(x),\alpha_L(m)),$ where $\alpha_*$ and $\alpha_L$ are as defined in Definition 3.2. 
\end{lemma}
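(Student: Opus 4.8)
The plan is to verify directly the two structural axioms of Definition 1.1 --- skew-symmetry and the Hom-Jacobi identity --- for the proposed bracket, once I have confirmed that both the bracket and $\alpha_f$ genuinely take values in $\textnormal{Ker}\sigma\times M$. For well-definedness I would note first that $f$ takes values in $\textnormal{Ker}\sigma$ by hypothesis and that $[m_1,m_2]\in M$ since $M$ is an ideal, so the bracket lands in $\textnormal{Ker}\sigma\times M$. For $\alpha_f$ I would use that $\sigma$ is a Hom-Lie homomorphism, so $\sigma\alpha_*=\alpha_L\sigma$; hence for $x\in\textnormal{Ker}\sigma$ we get $\sigma(\alpha_*(x))=\alpha_L(\sigma(x))=0$, giving $\alpha_*(x)\in\textnormal{Ker}\sigma$, while $\alpha_L(M)\subseteq M$ because $\alpha_M$ is the restriction of $\alpha_L$ to $M$. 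Thus $\alpha_f$ maps $\textnormal{Ker}\sigma\times M$ into itself.

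Bilinearity of the bracket is immediate from the bilinearity of $f$ and of the bracket of $L$ in each of the two coordinates. Skew-symmetry follows coordinatewise: the second coordinate is skew because $[\cdot,\cdot]$ is skew on $L$, and the first because $f$ is skew-bilinear (equivalently, axiom (1) together with bilinearity forces $f(m_1,m_2)=-f(m_2,m_1)$).

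The heart of the argument is the Hom-Jacobi identity. Writing $X=(x_1,m_1)$, $Y=(x_2,m_2)$, $Z=(x_3,m_3)$, I would compute the cyclic sum $[\alpha_f(X),[Y,Z]]+[\alpha_f(Y),[Z,X]]+[\alpha_f(Z),[X,Y]]$ coordinate by coordinate. A key observation is that the first coordinate of any bracket $[(a,p),(b,q)]$ depends only on the $M$-entries $p,q$ (it equals $f(p,q)$) and is insensitive to the $\textnormal{Ker}\sigma$-entries. Consequently the first coordinate of the cyclic sum equals $f(\alpha_L(m_1),[m_2,m_3])+f(\alpha_L(m_2),[m_3,m_1])+f(\alpha_L(m_3),[m_1,m_2])$, while the second coordinate equals $[\alpha_L(m_1),[m_2,m_3]]+[\alpha_L(m_2),[m_3,m_1]]+[\alpha_L(m_3),[m_1,m_2]]$. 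The second coordinate vanishes by the Hom-Jacobi identity in $L$ (recall $m_i\in M\subseteq L$, and $\alpha_M,[\cdot,\cdot]$ are restrictions of the data on $L$). For the first coordinate I would invoke skew-bilinearity of $f$ to rewrite factor-set axiom (2): replacing each term $f([l_i,l_j],\alpha_L(l_k))$ by $-f(\alpha_L(l_k),[l_i,l_j])$ turns axiom (2) into $f(\alpha_L(l_1),[l_2,l_3])+f(\alpha_L(l_2),[l_3,l_1])+f(\alpha_L(l_3),[l_1,l_2])=0$, which with $l_i=m_i$ is exactly the first coordinate computed above. Hence the cyclic sum is $(0,0)$ and the Hom-Jacobi identity holds.

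The only subtlety --- and the step I would flag as the main obstacle --- is precisely this transposition: factor-set axiom (2) is stated with the commutator in the first slot of $f$, whereas the bracket naturally produces $f$ with $\alpha_L(m_i)$ in the first slot, so the two expressions coincide only after applying skew-symmetry of $f$. Every remaining verification is a routine coordinatewise computation.
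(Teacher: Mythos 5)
Your proposal is correct and follows essentially the same route as the paper: a direct coordinatewise verification of skew-symmetry and the Hom-Jacobi identity for the bracket $[(x_1,m_1),(x_2,m_2)]=(f(m_1,m_2),[m_1,m_2])$. In fact you supply the one detail the paper elides behind ``by performing similar calculations'' --- namely that the vanishing of the first-coordinate cyclic sum requires rewriting factor-set axiom (2) via the skew-symmetry of $f$ to obtain $f(\alpha_L(m_1),[m_2,m_3])+f(\alpha_L(m_2),[m_3,m_1])+f(\alpha_L(m_3),[m_1,m_2])=0$ --- as well as the routine check that the bracket and $\alpha_f$ actually land in $\textnormal{Ker}\,\sigma\times M$.
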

\begin{proof}
First we will check the anti-symmetry
\begin{align*}
[(x_1,m_1),(x_2,m_2)]&=(f(m_1,m_2),[m_1,m_2]) \\&=(-f(m_2,m_1),-[m_2,m_1]) \\&=-[(x_2,m_2),(x_1,m_1)].
\end{align*}
Now to check the Hom-Jacobi identity, we have
\begin{align*}
[\alpha_f(x_1,m_1),[(x_2,m_2),(x_3,m_3)]] &=[(\alpha_*(x_1),\alpha_L(m_1)),(f(m_2,m_3),[m_2,m_3])] \\&=(f(\alpha_L(m_1),[m_2,m_3]),[\alpha_L(m_1),[m_2,m_3]]),
\end{align*}
by performing similar calculations we can find that 
\begin{align*}
	[\alpha_f(x_1,m_1),[(x_2,m_2),(x_3,m_3)]] &+[\alpha_f(x_2,m_2),[(x_3,m_3),(x_1,m_1)]] \\&+[\alpha_f(x_3,m_3),[(x_1,m_1),(x_2,m_2)]]=0.
\end{align*}
Therefore $\textnormal{Ker}\sigma\times M$ is a Hom-Lie algebra.
\end{proof}
We denote the above Hom-Lie algebra by $(\textnormal{Ker}\sigma\times M)_f$. Then the projective map, 
\begin{align*}
\sigma_f:(\textnormal{Ker}\sigma\times M)_f&~\rightarrow ~L \\(x,m)&~\mapsto~ m
\end{align*}
together with an action of $L$ on $(\textnormal{Ker}\sigma\times M)_f$ defined by $^l(x,m)=(f(m,l),[m,l])$is a relative central extension of the pair $(M,L)$.

\vspace{0.3cm} 

\begin{definition}
Let $\sigma:(M^*,\alpha_*)\rightarrow (L,\alpha_L)$ be a relative central extension of a pair of Hom-Lie algebras $(M,L)$.  If $\textnormal{Ker}\sigma\subseteq [M^*,L]$, then $\sigma$ is called as a \textit{stem relative central extension}.
\end{definition}

\begin{lemma}\label{lem11}
Let $\sigma_1:(M_1^*,\alpha_1^*)\rightarrow (L_1,\alpha_1)$ and $\sigma_2:(M_2^*,\alpha_2^*)\rightarrow (L_2,\alpha_2)$ be two relative central extensions of the pairs $(M_1,L_1)$ and $(M_2,L_2)$ of finite dimensional Hom-Lie algebras, respectively. Then, 
\begin{enumerate}
\item $\sigma_1$ is a stem relative central extension if and only if $\textnormal{Ker}\sigma_1$ contains no non-zero ideal $N$ satisfying $N\cap[M_1^*,L_1]=0$.
\item If $\sigma_1$ and $\sigma_2$ are isoclinic stem relative central extensions, then $\textnormal{Ker}\sigma_1\cong\textnormal{Ker}\sigma_2$. In particular, if the Hom-Lie algebras $M_1^*$ and $M_2^*$ are finite dimensional, then $\textnormal{dim}M_1^*=\textnormal{dim}M_2^*$.
\end{enumerate}
\end{lemma}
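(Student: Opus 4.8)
The plan is to prove the two parts separately, deriving (2) almost entirely from Lemma \ref{lem1} and locating the only genuine work in the converse direction of (1). Throughout part (1) write $K=\textnormal{Ker}\sigma_1$ and $C=[M_1^*,L_1]$, and recall from the remarks after the definition of relative central extension that $K\subseteq Z(M_1^*)$. Consequently $[N,M_1^*]\subseteq[Z(M_1^*),M_1^*]=0$ for every subspace $N\subseteq K$, so every $\alpha_1^*$-invariant subspace of $K$ is automatically an ideal of $M_1^*$. The forward implication is then immediate: if $\sigma_1$ is stem we have $K\subseteq C$, so any ideal $N\subseteq K$ with $N\cap C=0$ satisfies $N=N\cap C=0$. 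For the converse I would argue by contraposition: assume $\sigma_1$ is not stem, i.e.\ $K\not\subseteq C$, so that $K\cap C$ is a \emph{proper} subspace of $K$. Both $K$ and $C$ are $\alpha_1^*$-invariant --- $K$ because $\sigma_1\alpha_1^*=\alpha_1\sigma_1$ forces $\alpha_1^*(K)\subseteq K$, and $C$ because the third action axiom ${}^{\alpha_1(l)}\alpha_1^*(m)=\alpha_1^*({}^{l}m)$ sends generators of $C$ back into $C$. Hence $K\cap C$ is an $\alpha_1^*$-invariant subspace of the finite-dimensional space $K$, and the aim is to produce a non-zero $\alpha_1^*$-invariant complement $N$ of $K\cap C$ in $K$; such an $N$ is then a non-zero ideal of $M_1^*$ inside $K$ with $N\cap C=N\cap(K\cap C)=0$, contradicting the hypothesis and forcing $K\subseteq C$.

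I expect the construction of this invariant complement to be the main obstacle, and it is exactly here that finite-dimensionality and the regularity (bijectivity) of $\alpha_1^*$ are used: one passes to the invertible operator induced by $\alpha_1^*$ on $K$ and splits off an $\alpha_1^*$-stable complement of the invariant subspace $K\cap C$. I would make this step explicit rather than simply declaring that ``a complement'' of $K\cap C$ is an ideal, since an arbitrarily chosen vector-space complement need not be $\alpha_1^*$-invariant, and it is precisely the $\alpha_1^*$-invariance that upgrades $N$ from a subspace to an ideal of the Hom-Lie algebra $M_1^*$.

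For part (2), let $(\gamma,\beta')$ be an isoclinism from $\sigma_1$ to $\sigma_2$, so that $\gamma$ and $\beta'$ are Hom-Lie isomorphisms. By Lemma \ref{lem1}(2) we have $\beta'(\textnormal{Ker}\sigma_1\cap[M_1^*,L_1])=\textnormal{Ker}\sigma_2\cap[M_2^*,L_2]$. Since both extensions are stem, $\textnormal{Ker}\sigma_i\subseteq[M_i^*,L_i]$, hence $\textnormal{Ker}\sigma_i\cap[M_i^*,L_i]=\textnormal{Ker}\sigma_i$ for $i=1,2$; therefore $\beta'$ restricts to an isomorphism $\textnormal{Ker}\sigma_1\to\textnormal{Ker}\sigma_2$, giving $\textnormal{Ker}\sigma_1\cong\textnormal{Ker}\sigma_2$. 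For the dimension count I would use condition (1) of the relative central extension, namely $\sigma_i(M_i^*)=M_i$, so that the rank--nullity theorem yields $\dim M_i^*=\dim\textnormal{Ker}\sigma_i+\dim M_i$. The isomorphism just obtained gives $\dim\textnormal{Ker}\sigma_1=\dim\textnormal{Ker}\sigma_2$, while $\gamma$ restricts to an isomorphism $M_1\to M_2$ (because $\gamma(M_1)=M_2$), so $\dim M_1=\dim M_2$. Adding these two equalities gives $\dim M_1^*=\dim M_2^*$, and this part is entirely routine once Lemma \ref{lem1}(2) is available.
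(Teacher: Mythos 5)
Your forward direction of (1) and all of part (2) are correct and essentially the paper's own argument: the paper likewise gets $\textnormal{Ker}\sigma_1\cong\textnormal{Ker}\sigma_2$ from Lemma \ref{lem1}(2) combined with the stem hypothesis, and settles the dimension count from $\gamma(M_1)=M_2$ and the isomorphism of the kernels. The problem is the converse of (1). Your plan stands or falls on producing a \emph{non-zero} $\alpha_1^*$-invariant complement $N$ of $K\cap C$ in $K$, and the justification you offer --- finite-dimensionality plus invertibility of the operator induced by $\alpha_1^*$ on $K$ --- does not deliver it. An invertible operator on a finite-dimensional space can have an invariant subspace with no invariant complement: take $\alpha_1^*|_K$ to be a single Jordan block of size $2$ with eigenvalue $1$ and suppose $K\cap C$ is the line spanned by the eigenvector; the only non-zero invariant subspaces of $K$ are that line and $K$ itself, and neither is a complement. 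Regularity gives bijectivity, not semisimplicity, so this step is a genuine gap rather than a routine verification, and it is the whole content of the direction you are proving.

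The paper's converse takes a different, much more local route that avoids asking for a complement. Since $\textnormal{Ker}\sigma_1\subseteq Z(M_1^*,L_1)$ always holds, failure of the stem condition yields a non-zero $x\in\textnormal{Ker}\sigma_1\setminus[M_1^*,L_1]$. The paper then considers the central ideal $\langle x\rangle$ generated by $x$: by the hypothesis it must meet $[M_1^*,L_1]$ non-trivially, so $ax\in[M_1^*,L_1]$ for some non-zero scalar $a$, forcing $x\in[M_1^*,L_1]$, a contradiction. This needs only one small ideal inside the kernel avoiding the commutator, not an invariant decomposition of all of $K$. (To be fair, the paper is itself quietly treating $\langle x\rangle$ as the one-dimensional span of $x$ and assuming it is $\alpha_1^*$-stable --- the very point your own remark about ideals being $\alpha_1^*$-invariant raises --- but that is a far weaker demand than the complete reducibility your construction requires.) If you want to repair your write-up, switch to the paper's single-element argument rather than trying to justify the invariant complement.
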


\begin{proof}
\begin{enumerate}
\item Let's suppose that $\sigma_1$ is stem and $N$ is an ideal of $M_1^*$ with $N\subseteq \textnormal{Ker}\sigma_1$ and $ N\cap[M_1^*,L_1]=0$. But we know that $\textnormal{Ker}\sigma_1\subseteq Z(M_1^*,L_1)$. Also, since $\sigma_1$ is stem, so $\textnormal{Ker}\sigma_1\subseteq[M_1^*,L_1]$. Therefore, we have $$N\subseteq N\cap\textnormal{Ker}\sigma_1\subseteq N\cap Z(M_1^*,L_1)\cap [M_1^*,L_1]=N\cap [M_1^*,L_1]=0,$$ and the result follows.

\vspace{0.1cm} 

Conversely, let $ \textnormal{Ker}\sigma_1\subsetneq Z(M_1^*,L_1)\cap [M_1^*,L_1].$ Then there exists a non-zero element $x\in\textnormal{Ker}\sigma_1\setminus (Z(M_1^*,L_1)\cap [M_1^*,L_1]) $. Now $\langle x\rangle$ is clearly a central ideal of $M_1^*$ and consequently, by hypothesis, we have $\langle x\rangle\cap [M_1^*,L_1]\neq 0$. So, for some non-zero element $a\in F, ~ ax\in [M_1^*,L_1]$, which is a contradiction. Therefore, $\textnormal{Ker}\sigma_1\subseteq Z(M_1^*,L_1)\cap [M_1^*,L_1]$ and $\sigma_1$ is a stem relative central extension.

\item Let $\sigma_1$ and $\sigma_2$ be isoclinic stem relative central extensions, then there exists an isomorphism $\gamma :(L_1,\alpha_1)\rightarrow (L_2,\alpha_2)$ with $\gamma(M_1)=M_2$. Therefore, $\textnormal{dim}M_1=\textnormal{dim}M_2$. Also, by Lemma \ref{lem1} (2), $\textnormal{Ker}\sigma_1\cong \textnormal{Ker}\sigma_2$. If $M_1^*$ and $M_2^*$ are finite dimensional, then by the isomorphism theorems, we have $\textnormal{dim}M_1^*=\textnormal{dim}M_2^*$.
\end{enumerate}
\end{proof}

The next result shows that every isoclinic family of the relative central extensions of pairs of Hom-Lie algebras contains a stem relative central extension.

\begin{corollary}\label{cor1}
 Any relative central extension is isoclinic to a stem relative central extension.
\end{corollary}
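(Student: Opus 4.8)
The plan is to build the required stem extension by factoring $M^*$ by the largest part of $\textnormal{Ker}\,\sigma$ that avoids the commutator, and then to read off the isoclinism from Example 2. Write $\sigma:(M^*,\alpha_*)\rightarrow(L,\alpha_L)$ for the given relative central extension of the pair $(M,L)$. By the remark following the definition of relative central extension we have $\textnormal{Ker}\,\sigma\subseteq Z(M^*)$, so every $\alpha_*$-invariant subspace of $\textnormal{Ker}\,\sigma$ has zero bracket with all of $M^*$ and is therefore an ideal of $M^*$; moreover $\textnormal{Ker}\,\sigma$ and $[M^*,L]$ are $\alpha_*$-invariant ideals because $L$ is regular.

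First I would choose $N$ to be maximal among the $\alpha_*$-invariant ideals of $M^*$ that are contained in $\textnormal{Ker}\,\sigma$ and satisfy $N\cap[M^*,L]=0$ (such a maximal $N$ exists in the finite-dimensional setting of the preceding lemmas by a dimension count, and in general by Zorn's lemma, since the union of a chain of such ideals again has these properties). Since $N\subseteq\textnormal{Ker}\,\sigma$ is an ideal, Example 2 applies verbatim: the induced map $\overline{\sigma}:M^*/N\rightarrow L$, $m+N\mapsto\sigma(m)$, together with the action ${}^{l}(m+N)=({}^{l}m+N)$, is a relative central extension of $(M,L)$, and the natural projection gives an isoclinic epimorphism $(1_L,\rho_{M^*}):\sigma\rightarrow\overline{\sigma}$. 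By Definition 2.2 this epimorphism is itself an isoclinism, so $\sigma$ is isoclinic to $\overline{\sigma}$.

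It then remains to verify that $\overline{\sigma}$ is stem, and here I would invoke Lemma \ref{lem11}(1). Using $N\subseteq\textnormal{Ker}\,\sigma$ and the action above one computes $\textnormal{Ker}\,\overline{\sigma}=\textnormal{Ker}\,\sigma/N$ and $[M^*/N,L]=([M^*,L]+N)/N$. Let $\overline{N}=P/N$ be an ideal of $M^*/N$ with $\overline{N}\subseteq\textnormal{Ker}\,\overline{\sigma}$ and $\overline{N}\cap[M^*/N,L]=0$, where $N\subseteq P\subseteq\textnormal{Ker}\,\sigma$ is an ($\alpha_*$-invariant) ideal. The triviality of the intersection forces $P\cap([M^*,L]+N)\subseteq N$, whence $P\cap[M^*,L]\subseteq N\cap[M^*,L]=0$; thus $P$ is an $\alpha_*$-invariant ideal in $\textnormal{Ker}\,\sigma$ with $P\cap[M^*,L]=0$ and $P\supseteq N$, so maximality gives $P=N$ and $\overline{N}=0$. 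Hence $\textnormal{Ker}\,\overline{\sigma}$ contains no nonzero ideal meeting $[M^*/N,L]$ trivially, and Lemma \ref{lem11}(1) shows $\overline{\sigma}$ is a stem relative central extension isoclinic to $\sigma$.

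The step I expect to be the main obstacle is the compatibility with the twisting map $\alpha_*$ throughout the maximality argument: one must work with $\alpha_*$-invariant ideals (so that the quotient $(M^*/N,\overline{\alpha_*})$ and Example 2 are legitimate) and confirm that the maximal such $N$ still yields the clean pullback computation above. An attractive alternative is the direct complement approach, splitting $\textnormal{Ker}\,\sigma=(\textnormal{Ker}\,\sigma\cap[M^*,L])\oplus N$ with $\alpha_*(N)=N$ and reading stemness off the decomposition without Lemma \ref{lem11}(1); but there the delicate point is that bijectivity of $\alpha_*$ alone does not guarantee an $\alpha_*$-\emph{invariant} complement of an invariant subspace, so the maximal-ideal formulation is the safer route. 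Everything after the choice of $N$ is the routine correspondence-theorem bookkeeping indicated above together with Example 2.
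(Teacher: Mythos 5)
Your proposal is correct and follows essentially the same route as the paper: take $N$ maximal (via Zorn's lemma) among the ideals of $\textnormal{Ker}\,\sigma$ meeting $[M^*,L]$ trivially, pass to $\overline{\sigma}:M^*/N\to L$ using Example 2 to get the isoclinism, and then use the correspondence of ideals together with maximality and Lemma \ref{lem11}(1) to conclude that $\overline{\sigma}$ is stem. Your extra attention to $\alpha_*$-invariance of the ideals is a reasonable refinement of a point the paper leaves implicit, but it does not change the argument.
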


\begin{proof}
Let $\mathcal{A}=\{N|N \textnormal{~is~an~ideal~of~Ker}\sigma ~\textnormal{and}~N\cap[M^*,L]=0\}$, where $\sigma:M^*\rightarrow L$ is a relative central extension of $(M,L)$. Then the set $\mathcal{A}$ is non-empty, as it contains the zero ideal. Now if we define a partial ordering on $\mathcal{A}$ by inclusion then by Zorn's lemma, we can find a maximal ideal $N$ in $\mathcal{A}$. Since, $N\subset\textnormal{Ker}\sigma$, by example 2, $\overline{\sigma}:M^*/n\rightarrow L$ is a relative central extension and $\sigma\sim\overline{\sigma}$.

\vspace{0.1cm} 

Now, let $K/N$ is an ideal of $M^*/N$ such that $K/N\subseteq\textnormal{Ker}\overline{\sigma}$ and $K/N\cap[M^*/N,L]=0$. Then $(K\cap[M^*,L]+N)/N=0$ and hence $K\cap[M^*,L]\subseteq N$. Since $N\cap[M^*,L]=0$, we conclude that $K\in \mathcal{A}$. Also, we have $N\subseteq K$. So by the maximality of $N$, it follows that $K=N$. Therefore, $K/N$ is zero ideal and hence $\overline{\sigma}$ is a stem relative central extension by Lemma \ref{lem11} (1).
\end{proof}

\begin{lemma} \label{lem2}
Let $\sigma_1:(M_1^*,\alpha_1^*)\rightarrow (L_1,\alpha_1)$ and $\sigma_2:(M_2^*,\alpha_2^*)\rightarrow (L_2,\alpha_2)$ be relative central extensions of the pairs of Hom-Lie algebras $(M_1,L_1)$ and $(M_2,L_2)$, respectively. Then
\begin{enumerate}
\item There exists a factor set $f:(L_1,\alpha_1)\times (L_1,\alpha_1)\rightarrow\textnormal{Ker}(\sigma_1)$ such that the relative central extensions $\sigma_1$ and $\sigma_{1f}$ are isomorphic.
\item If $\sigma_1$ and $\sigma_{2}$ are stem and $(\gamma,\beta^\prime):\sigma_1\rightarrow\sigma_{2}$ is an isoclinism, then there exists a factor set $g:(L_1,\alpha_1)\times (L_1,\alpha_1)\rightarrow\textnormal{Ker}(\sigma_1)$ such that the relative central extensions $\sigma_1$ and $\sigma_{1g}$ are isomorphic.
\end{enumerate}
\end{lemma}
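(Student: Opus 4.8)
The plan is to handle both parts by the classical factor-set method adapted to the Hom-setting, deriving part (2) by transporting across the isoclinism the factor set produced in part (1). For part (1), I would first fix a linear section of $\sigma_1$ over $M_1$: since $\alpha_1^*$ is bijective and $\textnormal{Ker}\sigma_1$ is $\alpha_1^*$-invariant (because $\sigma_1\alpha_1^*=\alpha_1\sigma_1$), I can split off an $\alpha_1^*$-invariant complement of $\textnormal{Ker}\sigma_1$ in $M_1^*$ and obtain a linear map $t:M_1\rightarrow M_1^*$ with $\sigma_1 t=\mathrm{id}_{M_1}$ and $t\alpha_1=\alpha_1^* t$. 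The factor set $f:(L_1,\alpha_1)\times(L_1,\alpha_1)\rightarrow\textnormal{Ker}\sigma_1$ is then defined, up to the $\alpha$-twists needed to keep values in $\textnormal{Ker}\sigma_1$, so as to record the failure of $t$ to be a Hom-Lie homomorphism on $M_1\times M_1$ (roughly $[t(m_1),t(m_2)]_{M_1^*}-t([m_1,m_2])$) and the failure of $t$ to intertwine the $L_1$-action on $M_1\times L_1$, extended skew-bilinearly to $L_1\times L_1$. Using $\textnormal{Ker}\sigma_1\subseteq Z(M_1^*)\cap Z(M_1^*,L_1)$ one checks that $f$ is $\textnormal{Ker}\sigma_1$-valued and that $f(l,l)=0$, while the cocycle identity comes from the Hom-Jacobi identity in $L_1$ combined with the defining relations of a relative central extension. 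Finally, the linear isomorphism $\Phi:(\textnormal{Ker}\sigma_1\times M_1)_f\rightarrow M_1^*$, $\Phi(x,m)=x+t(m)$, is by construction a Hom-Lie isomorphism carrying the bracket and action of $(\textnormal{Ker}\sigma_1\times M_1)_f$ from Lemma \ref{lem101} to those of $M_1^*$, and $\sigma_1\Phi(x,m)=m=\sigma_{1f}(x,m)$, so that $(1_{L_1},\Phi)$ realises the isomorphism $\sigma_{1f}\cong\sigma_1$.

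For part (2), I would begin from the isoclinism $(\gamma,\beta^\prime)$. Since $\sigma_1$ and $\sigma_2$ are stem we have $\textnormal{Ker}\sigma_i\subseteq[M_i^*,L_i]$, so Lemma \ref{lem1}(2) yields $\beta^\prime(\textnormal{Ker}\sigma_1)=\textnormal{Ker}\sigma_2$ and hence an isomorphism $\overline{\beta}:=\beta^\prime|_{\textnormal{Ker}\sigma_1}:\textnormal{Ker}\sigma_1\rightarrow\textnormal{Ker}\sigma_2$ (this is exactly the point where stem-ness is indispensable: without it the values of the factor set need not lie in the domain $[M_1^*,L_1]$ of $\beta^\prime$). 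Applying part (1) to $\sigma_2$ gives a factor set $f_2:(L_2,\alpha_2)\times(L_2,\alpha_2)\rightarrow\textnormal{Ker}\sigma_2$ with $\sigma_2\cong\sigma_{2f_2}$. I would then define $g:(L_1,\alpha_1)\times(L_1,\alpha_1)\rightarrow\textnormal{Ker}\sigma_1$ by $g(l_1,l_1^\prime)=\overline{\beta}^{\,-1}\bigl(f_2(\gamma(l_1),\gamma(l_1^\prime))\bigr)$ and verify it is a factor set: skew-bilinearity and $g(l,l)=0$ are immediate, and the cocycle identity is inherited from $f_2$ because $\gamma$ is a Hom-isomorphism, so $\gamma[\cdot,\cdot]=[\gamma\cdot,\gamma\cdot]$ and $\gamma\alpha_1=\alpha_2\gamma$, while $\overline{\beta}^{\,-1}$ is linear.

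The decisive step is then to prove $\sigma_1\cong\sigma_{1g}$, for which I would compare $g$ with the section-induced factor set $f_1$ of $\sigma_1$ from part (1). Because both extensions are stem, the values of $f_1$ lie in $\textnormal{Ker}\sigma_1\subseteq[M_1^*,L_1]$, precisely the subalgebra on which $\beta^\prime$ operates; using that $\beta^\prime$ intertwines the two maps and the two actions (Lemma \ref{lem1}(1),(3)), one shows that $\overline{\beta}\,f_1$ and $f_2(\gamma\times\gamma)$ differ only by the coboundary coming from the difference between $\beta^\prime t_1$ and the chosen section $t_2$ of $\sigma_2$. Hence $g$ is cohomologous to $f_1$, so $\sigma_{1g}\cong\sigma_{1f_1}\cong\sigma_1$ by part (1), as required; dimension consistency is guaranteed by Lemma \ref{lem11}(2). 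I expect the main obstacle throughout to be the $\alpha$-bookkeeping: the cocycle and coboundary manipulations that are routine for Lie algebras must here be carried out with the Hom-Jacobi identity and with the $\alpha$-equivariance of $t$, $\gamma$ and $\beta^\prime$, and pinning down the exact $\alpha$-corrections so that $f$ and $g$ genuinely take values in the respective kernels is the delicate part of the argument.
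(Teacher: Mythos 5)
Your part (1) is essentially the paper's own argument: choose a linear section $t$ of $\sigma_1$ over $M_1$, set $f(m_1,m_2)=[t_{m_1},t_{m_2}]-t_{[m_1,m_2]}$ on $M_1\times M_1$ and zero elsewhere, and check that $(x,m)\mapsto x+t_m$ is a Hom-Lie isomorphism $(\textnormal{Ker}\sigma_1\times M_1)_f\to M_1^*$ over $1_{L_1}$. Your extra step of taking the complement of $\textnormal{Ker}\sigma_1$ to be $\alpha_1^*$-invariant is in fact needed to justify $\alpha_1^*(t_m)=t_{\alpha_1(m)}$, which the paper asserts from $\sigma_1\alpha_1^*=\alpha_1\sigma_1$ alone; this is a point in your favour. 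No issue here.

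Part (2) is where the gap is. You construct the same $g(m_1,m_2)=(\beta')^{-1}\bigl(h(\gamma(m_1),\gamma(m_2))\bigr)$ as the paper (with $h$ the factor set of $\sigma_2$ produced by part (1)), but you then try to prove the literal printed claim $\sigma_1\cong\sigma_{1g}$ by arguing that $g$ is cohomologous to the factor set $f_1$ of $\sigma_1$. The decisive step, comparing $\overline{\beta}f_1$ with $h\circ(\gamma\times\gamma)$ ``up to the coboundary coming from the difference between $\beta' t_1$ and $t_2$,'' is not available: $\beta'$ is an isomorphism only between $[M_1^*,L_1]$ and $[M_2^*,L_2]$, so $\beta' t_1$ is undefined for a general section $t_1:M_1\to M_1^*$, and the linear map $d$ realising the coboundary is precisely the object that Proposition \ref{pro4} has to construct with care (only on $[M,L]$, extended by zero on a complement, under a dimension hypothesis). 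If your cohomology argument went through as stated, it would yield Corollary \ref{cor2} (isoclinic stem extensions are isomorphic) already inside this lemma, with no finite-dimensionality and without Proposition \ref{pro4} — which should signal that something is being assumed rather than proved. In fact the conclusion the paper proves, and the one Corollary \ref{cor2} actually uses, is $\sigma_{1g}\cong\sigma_{2h}\cong\sigma_2$, not $\sigma_{1g}\cong\sigma_1$; the ``$\sigma_1$'' in the printed statement of (2) is evidently a typo for ``$\sigma_2$.'' The intended final step is short and direct: $\theta:(\textnormal{Ker}\sigma_1\times M_1)_g\to(\textnormal{Ker}\sigma_2\times M_2)_h$, $\theta(x,m)=(\beta'(x),\gamma(m))$, is a Hom-Lie isomorphism exactly because $g=(\beta')^{-1}\circ h\circ(\gamma\times\gamma)$ and $\beta'(\textnormal{Ker}\sigma_1)=\textnormal{Ker}\sigma_2$ (stem-ness together with Lemma \ref{lem1}(2)). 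Replacing your cohomology step with this map repairs part (2).
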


\begin{proof}
\begin{enumerate}
\item Let $T_1$ be the vector space complement of $\textnormal{Ker}(\sigma_1)$ in $M_1^*$. Then for any $m_1\in M_1$, there is a unique element $t_{m_1}\in T_1$ with $\sigma_1(t_{m_1})=m_1$. 
Therefore we have 
\begin{align*}
\sigma_1(t_{[m_1,m_2]})=[m_1,m_2]=-[m_2,m_1]&=-\sigma_1(t_{[m_2,m_1]})\\&=\sigma_1(-t_{[m_2,m_1]}), 
\end{align*}
which implies that $t_{[m_1,m_2]}=-t_{[m_2,m_1]}$ for all $m_1,m_2\in M_1.$

\vspace{0.1cm} 




Again since $\sigma_1$ is a homomorphism, so we have 
\begin{align*}
\sigma_1([t_{[m_1,m_2]},t_{\alpha_1(m_3)}])=[\sigma_1({t_{[m_1,m_2]}}),\sigma_1({t_{\alpha_1(m_3)}})]=[[m_1,m_2],\alpha_1(m_3)]=\sigma_1(t_{[[m_1,m_2],\alpha_1(m_3)]})
\end{align*}
$$\implies[t_{[m_1,m_2]},t_{\alpha_1(m_3)}]=t_{[[m_1,m_2],\alpha_1(m_3)]}.$$

\vspace{0.1cm}

Let's define a map $f:(L_1,\alpha_1)\times (L_1,\alpha_1)\rightarrow\textnormal{Ker}(\sigma_1)$ given by $f(m_1,m_2)=[t_{m_1},t_{m_2}]-t_{[m_1,m_2]}$ for all $m_1,m_2\in M_1$ and $f(x,y)=0$, if $x,y\in L_1\setminus M_1$. We will show that $f$ is a factor set.

\vspace{0.1cm} 

From the definition it is clear that $f$ satisfies all the conditions of a factor set on $L_1\setminus M_1$. So it only remains to show that $f$ is also a factor set on $M_1$. It is obvious that $f(m_1,m_2)\in\textnormal{Ker}(\sigma_1)$. Now to check the skew-symmetric property of $f$, we take for all  $m_1,m_2\in M_1$
\begin{align*}
f(m_1,m_2)&=[t_{m_1},t_{m_2}]-t_{[m_1,m_2]}\\&=-[t_{m_2},t_{m_1}]+t_{[m_2,m_1]}\\&=-\{[t_{m_2},t_{m_1}]-t_{[m_2,m_1]}\}\\&=-f(m_2,m_1).
\end{align*}

To check the Jacobi identity, we take for all $m_1,m_2,m_3\in M_1$
\begin{align*}
f([m_1,m_2],\alpha_1(m_3))=[t_{[m_1m_2]},t_{\alpha_1(m_3)}]-t_{[[m_1,m_2],\alpha_1(m_3)]}=0,
\end{align*}
and similarly, $f([{m_2},{m_3}],\alpha_1({m_1}))=0=f([{m_3},{m_1}],\alpha_1({m_2}))$.
\\i.e., $f$ also satisfies the Jacobi identity and hence is a factor set. Now consider the mapping $\beta:(\textnormal{Ker}\sigma_1\times M_1)_f\rightarrow M_1^*$ given by $\beta(x_1,m_1)=x_1+t_{m_1}$. Then we have, $(x_1,m_1)=(x_2,m_2)\Leftrightarrow x_1+t_{m_1}=x_2+t_{m_2}\Leftrightarrow \beta{(x_1,m_1)}=\beta{(x_2,m_2)}$ i.e., $\beta$ is well defined and obviously bijective. Also, 
\begin{align*}
\beta([(x_1,m_1),(x_2,m_2)])=\beta(f(m_1,m_2),[m_1,m_2])&=f(m_1,m_2)+t_{[m_1,m_2]}\\&=[t_{m_1},t_{m_2}]\\&=[x_1+t_{m_1},x_2+t_{m_2}]\\&=[\beta{(x_1,m_1)},\beta{(x_2,m_2)}].
\end{align*}
i.e., $\beta$ preserves the bracket operation. Again, we know that the relative central extension $\sigma_1:(M_1^*,\alpha_1^*)\rightarrow (L_1,\alpha_1)$ being a Hom-Lie homomorphism satisfies $\sigma_1\alpha_1^*=\alpha_1\sigma_1$. So, we have $\sigma_1\alpha_1^*(t_m)=\alpha_1\sigma_1(t_m)=\alpha_1(m)$, which implies that $\alpha_1^*(t_m)=t_{\alpha_1(m)}$. Now for some $(x,m)\in (\textnormal{Ker}\sigma_1\times M_1)_f$, we get 
\begin{align*}
\beta{\alpha_f}(x,m)=\beta(\alpha_1^*(x),\alpha_1(m))&=\alpha_1^*(x)+t_{\alpha_1(m)}\\&=\alpha_1^*(x)+\alpha_1^*(t_m)\\&=\alpha_1^*(x+t_m)\\&=\alpha_1^*\beta(x,m),
\end{align*}
i.e., $\beta{\alpha_f}=\alpha_1^*\beta$, which confirms that $\beta$ is a Hom-Lie isomorphism. Therefore, $(1_{L_1},\beta):\sigma_{1f}\rightarrow \sigma_1$ is an isomorphism between the central extensions $\sigma_{1f}$ and $\sigma_1$.
\vspace{0.1cm} 
\item By Lemma \ref{lem1}(2), we have $\beta^\prime(\textnormal{Ker}\sigma_1)=\textnormal{Ker}\sigma_2$. Since $\sigma_2$ is a stem relative extension, so from the above proof, we can conclude that there exists a factor set $h:(L_2,\alpha_2)\times (L_2,\alpha_2)\rightarrow\textnormal{Ker}(\sigma_2)$ such that the relative central extensions $\sigma_2$ and $\sigma_{2h}$ are isomorphic. Define a map $g:(L_1,\alpha_1)\times (L_1,\alpha_1)\rightarrow\textnormal{Ker}(\sigma_1)$ given by $g(m_1,m_2)=\beta^{\prime-1}(h(\gamma(m_1),\gamma(m_2)))$ for all $m_1,m_2\in M_1$ and otherwise $g(x,y)=0.$ It can be easily shown that $g$ is a factor set.
\vspace{0.1cm} \\
Now consider a map $\theta:(\textnormal{Ker}\sigma_1\times M_1)_g\rightarrow (\textnormal{Ker}\sigma_2\times M_2)_h$ defined by $\theta(x_1,m_1)=(\beta^\prime(x_1),\gamma(m_1))$. It is obvious that $\theta$ is well defined and bijective. Also $\theta$ is a Hom-Lie homomorphism as,
\begin{align*}
\theta[(x_1,m_1),(x_2,m_2)]_g&=\theta(g(m_1,m_2),[m_1,m_2])\\&=\theta(\beta^{\prime-1}(h(\gamma(m_1),\gamma(m_2))),[m_1,m_2])\\&=(\beta^{\prime}(\beta^{\prime-1}(h(\gamma(m_1),\gamma(m_2)))),\gamma([m_1,m_2]))\\&=(h(\gamma(m_1),\gamma(m_2)),[\gamma(m_1),\gamma(m_2)])\\&=[(\beta^\prime(x_1),\gamma(m_1)),(\beta^\prime(x_2),\gamma(m_2))]_h\\&=[\theta(x_1,m_1),\theta(x_2,m_2)]_h,
\end{align*}
and if $\alpha_g$ and $\alpha_h$ are the linear maps in the Hom-Lie algebras $(\textnormal{Ker}\sigma_1\times M_1)_g$ and $(\textnormal{Ker}\sigma_2\times M_2)_h$ respectively, then
\begin{align*}
\theta\alpha_g(x_1,m_1)=\theta(\alpha_1^*(x_1),\alpha_1(m_1))&=(\beta^\prime\alpha_1^*(x_1),\gamma\alpha_1(m_1))\\&=(\alpha_2^*\beta^\prime(x_1),\alpha_2\gamma(m_1))\\&=\alpha_h(\beta^\prime(x_1),\gamma(m_1))\\&=\alpha_h\theta(x_1,m_1).
\end{align*}
i.e., $\theta$ is a Hom-Lie isomorphism and hence $(\gamma,\theta)$ is an isomorphism between $\sigma_{1g}$ and $\sigma_{2h}$. Therefore, $\sigma_{1g}\cong\sigma_{2h}\cong\sigma_{2}$.
\end{enumerate}
\end{proof}

\begin{proposition}\label{pro4}
Let $\sigma:(M^*,\alpha_*)\rightarrow (L,\alpha_L)$ be a finite dimensional relative central extension and $f,g: (L,\alpha_L)\times  (L,\alpha_L)\rightarrow\textnormal{Ker}\sigma$ be factor sets on $\sigma$ such that $\textnormal{dim}(\textnormal{Ker}\sigma\times M)_f=\textnormal{dim}(\textnormal{Ker}\sigma\times M)_g$, $\sigma_f$ is a stem relative extension and the extensions  $\sigma_f$ and $\sigma_g$ are isoclinic. Then $\sigma_f$ and $\sigma_g$ are isomorphic.
\end{proposition}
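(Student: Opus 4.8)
The plan is to build an explicit Hom-Lie isomorphism $\Phi:(\textnormal{Ker}\sigma\times M)_f\to(\textnormal{Ker}\sigma\times M)_g$ making $(\gamma,\Phi):\sigma_f\to\sigma_g$ an isomorphism, where $\gamma:(L,\alpha_L)\to(L,\alpha_L)$ is the isomorphism supplied by the given isoclinism. I write $C_f=[(\textnormal{Ker}\sigma\times M)_f,L]$ and $C_g=[(\textnormal{Ker}\sigma\times M)_g,L]$, and I use throughout that both extensions live on the same vector space $\textnormal{Ker}\sigma\times M$, that $\sigma_f$ and $\sigma_g$ are the second-coordinate projections, and that $\textnormal{Ker}\sigma_f=\textnormal{Ker}\sigma_g=\textnormal{Ker}\sigma\times\{0\}$.

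First I would promote the stem hypothesis to $\sigma_g$. Let $(\gamma,\beta^\prime):\sigma_f\to\sigma_g$ be the isoclinism. By Lemma \ref{lem1}(2), $\beta^\prime(\textnormal{Ker}\sigma_f\cap C_f)=\textnormal{Ker}\sigma_g\cap C_g$; since $\sigma_f$ is stem the left-hand intersection equals $\textnormal{Ker}\sigma_f$, so $\beta^\prime$ carries $\textnormal{Ker}\sigma_f$ injectively onto $\textnormal{Ker}\sigma_g\cap C_g$. Because $\textnormal{Ker}\sigma_f$ and $\textnormal{Ker}\sigma_g$ are both $\textnormal{Ker}\sigma\times\{0\}$ they have the same finite dimension, and comparing dimensions forces $\textnormal{Ker}\sigma_g\cap C_g=\textnormal{Ker}\sigma_g$, i.e.\ $\textnormal{Ker}\sigma_g\subseteq C_g$. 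Hence $\sigma_g$ is stem, $\beta^\prime(\textnormal{Ker}\sigma_f)=\textnormal{Ker}\sigma_g$, and $\beta^\prime$ induces a linear automorphism $\psi$ of $\textnormal{Ker}\sigma$ through $\beta^\prime(a,0)=(\psi(a),0)$.

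Next I would extract the factor-set identity encoded in $\beta^\prime$. As both extensions are stem, $\textnormal{Ker}\sigma\times\{0\}\subseteq C_f$ forces $C_f=\textnormal{Ker}\sigma\times D$ with $D=\pi_M(C_f)\supseteq[M,M]$, so every element of $C_f$ splits as $(a,0)+(0,\mu)$ with both summands in $C_f$, and Lemma \ref{lem1}(1) fixes the second coordinate, giving $\beta^\prime(a,\mu)=(\psi(a)+\rho(\mu),\gamma(\mu))$ for a linear map $\rho:D\to\textnormal{Ker}\sigma$; the relation $\beta^\prime\alpha_f=\alpha_g\beta^\prime$ yields $\rho\alpha_L=\alpha_*\rho$ on $D$. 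Feeding the action elements ${}^{q}\alpha_f(0,\alpha_L^{-1}(p))$ into the defining relation of the isoclinism and using the bijectivity of $\alpha_L$ produces
\[
\beta^\prime\big(f(p,q),[p,q]\big)=\big(g(\gamma(p),\gamma(q)),[\gamma(p),\gamma(q)]\big),\qquad p\in M,\ q\in L,
\]
whose first coordinate, for $p,q\in M$, reads $g(\gamma(p),\gamma(q))=\psi(f(p,q))+\rho([p,q])$. I would then extend $\rho|_{[M,M]}$ to a linear map $\tau:M\to\textnormal{Ker}\sigma$ with $\tau\alpha_L=\alpha_*\tau$ and define $\Phi(x,m)=(\psi(x)+\tau(m),\gamma(m))$. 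The displayed identity makes $\Phi$ bracket-preserving (the bracket in $(\ldots)_g$ ignores first coordinates, so only $\tau|_{[M,M]}=\rho$ enters), $\sigma_g\Phi=\gamma\sigma_f$ is immediate, and $\Phi$ is injective since $\gamma$ and $\psi$ are; with the hypothesis $\dim(\textnormal{Ker}\sigma\times M)_f=\dim(\textnormal{Ker}\sigma\times M)_g$ this injection is a bijection. Thus $(\gamma,\Phi):\sigma_f\cong\sigma_g$.

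The step I expect to be the main obstacle is the verification that $\Phi$ is a morphism in the twisted (Hom-Lie) sense, namely $\Phi\alpha_f=\alpha_g\Phi$. This boils down to extending $\rho|_{[M,M]}$ to $\tau$ on all of $M$ while preserving the intertwining $\tau\alpha_L=\alpha_*\tau$, which only holds automatically on the $\alpha_L$-invariant subspace $[M,M]$. Here regularity is essential: with $\alpha_L$ and $\alpha_*$ bijective and $[M,M]$ an $\alpha_L$-invariant subspace carrying the equivariant map $\rho$, one extends equivariantly, in the same spirit as the identity $\alpha_1^*(t_m)=t_{\alpha_1(m)}$ and the compatibility $\beta^\prime\alpha_f=\alpha_g\beta^\prime$ exploited in the proof of Lemma \ref{lem2}. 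Once this equivariant extension is in hand, the bracket-, $\sigma$-, and $\alpha$-compatibility of $\Phi$ follow, and the remaining checks are routine; everything else reduces to the single dimension count of the first step together with the factor-set identity above.
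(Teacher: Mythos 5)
Your proposal is essentially the paper's own proof: the same appeal to Lemma \ref{lem1}(2) to identify $\textnormal{Ker}\sigma_f$ with $\textnormal{Ker}\sigma_g$ under $\beta^\prime$, the same two-way computation of $\beta^\prime({}^l(0,m))$ producing the identity $g(\gamma(m),\gamma(l))=\psi(f(m,l))+\rho([m,l])$, and the same candidate isomorphism $\Phi(x,m)=\beta^\prime(x,0)+(\tau(m),\gamma(m))$ with $\tau$ extending the correction map off the commutator (the paper's $\lambda$ and $d$). You are in fact slightly more careful than the paper at the two delicate points --- the dimension count showing $\beta^\prime(\textnormal{Ker}\sigma_f)=\textnormal{Ker}\sigma_g$ (the paper cites only the intersection version of the lemma) and the $\alpha$-equivariance of the extension of $\tau$, which the paper handles by silently extending $d$ by zero on an arbitrary vector-space complement of $[M,L]$ --- though your claim that an equivariant extension always exists is asserted rather than proved, exactly as in the paper.
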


\begin{proof}
Let $(\gamma,\beta^\prime)$ be an isoclinism between $\sigma_f$ and $\sigma_g$. Then from Lemma \ref{lem1}(2), it follows that $\beta^\prime (\textnormal{Ker}\sigma_f)=\textnormal{Ker}\sigma_g$. Now for all $m\in M,l\in L$, we have $$\beta^\prime(^l(0,m))=~^{\gamma(l)}(0,\gamma(m))=(g(\gamma(m),\gamma(l)),\gamma([m,l])),$$ and $$\beta^\prime(^l(0,m))=\beta^\prime(f(m,l),[m,l])=\beta^\prime(f(m,l),0)+\beta^\prime(0,[m,l]).$$ 

\vspace{0.1cm} 

Let $d([m,l])$ be the first component of $\beta^\prime(^l(0,m))$. Then we can define a linear map $d:[M,L]\rightarrow\textnormal{Ker}\sigma$ satisfying $\rho\beta^\prime(f(m,l),0)+d([m,l])=g(\gamma(m),\gamma(l))$, where $\rho:\textnormal{Ker}\sigma_g\rightarrow\textnormal{Ker}\sigma$ is the projective map. If we take $d$ to be zero on the vector space complement of $[M,L]$ in $M$ then $d$ can be defined on whole $M$. Now it is easy to see that the map $\lambda:(\textnormal{Ker}\sigma\times N)_f\rightarrow(\textnormal{Ker}\sigma\times N)_g$ given by $\lambda(x,l)=\beta^\prime(x,0)+(d(l),\gamma(l))$, is an isomorphism. As $\lambda|_{\textnormal{Ker}\sigma_f}=\beta^\prime|_{\textnormal{Ker}\sigma_f}$ and $(\lambda,\gamma):\sigma_f\rightarrow\sigma_g$ is a morphism, we conclude that $\sigma_f$ and $\sigma_g$ are isomorphic.
\end{proof}

\begin{corollary}\label{cor2}
Let $\sigma_1:(M_1^*,\alpha_1^*)\rightarrow (L_1,\alpha_1)$ and $\sigma_2:(M_2^*,\alpha_2^*)\rightarrow (L_2,\alpha_2)$ be finite dimensional stem relative central extensions of the pairs $(M_1,L_1)$ and $(M_2,L_2)$ respectively. Then $\sigma_1$ and $\sigma_2$ are isoclinic if and only if they are isomorphic.
\end{corollary}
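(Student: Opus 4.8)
The plan is to prove the two implications separately, with all the substance lying in the direction ``isoclinic $\Rightarrow$ isomorphic''. For the forward direction, I would take an isomorphism $(\gamma,\beta)\colon\sigma_1\to\sigma_2$ and simply restrict $\beta$ to $[M_1^*,L_1]$. Since $\beta$ is a Hom-Lie isomorphism carrying $[M_1^*,L_1]$ onto $[M_2^*,L_2]$ and intertwining the $L$-actions, the restricted pair $(\gamma,\beta|_{[M_1^*,L_1]})$ inherits the action-compatibility from $\beta$ and therefore meets every clause of Definition 2.2; hence it is an isoclinism and $\sigma_1$ is isoclinic to $\sigma_2$. This half uses neither finite-dimensionality nor the stem hypothesis.

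For the converse, let $(\gamma,\beta^\prime)\colon\sigma_1\cong\sigma_2$ be an isoclinism. The key idea is to replace both extensions by factor-set extensions built over the \emph{common} base $\sigma_1$, so that Proposition \ref{pro4} becomes applicable. First I would apply Lemma \ref{lem2}(1) to obtain a factor set $f\colon L_1\times L_1\to\textnormal{Ker}\sigma_1$ with $\sigma_1\cong\sigma_{1f}$. Next, since $\sigma_1$ and $\sigma_2$ are both stem and isoclinic, Lemma \ref{lem2}(2) produces a factor set $g\colon L_1\times L_1\to\textnormal{Ker}\sigma_1$ whose associated extension realizes $\sigma_2$, that is $\sigma_{1g}\cong\sigma_2$. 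Now $f$ and $g$ are two factor sets on the single relative central extension $\sigma_1$, which is precisely the setting required by Proposition \ref{pro4}.

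It then remains to verify the three hypotheses of Proposition \ref{pro4} for the pair $(f,g)$. Being stem is invariant under isomorphism, so $\sigma_{1f}\cong\sigma_1$ forces $\sigma_{1f}$ to be stem. For the isoclinism hypothesis I would use that isoclinism of relative central extensions is an equivalence relation together with the forward direction above (every isomorphism is an isoclinism): the chain $\sigma_{1f}\sim\sigma_1\sim\sigma_2\sim\sigma_{1g}$ yields $\sigma_{1f}\sim\sigma_{1g}$. The dimension equality is where finiteness enters: the isomorphism of Lemma \ref{lem2}(1) identifies $(\textnormal{Ker}\sigma_1\times M_1)_f$ with $M_1^*$, while $\sigma_{1g}\cong\sigma_2$ identifies $(\textnormal{Ker}\sigma_1\times M_1)_g$ with $M_2^*$; since $\sigma_1,\sigma_2$ are isoclinic stem extensions, Lemma \ref{lem11}(2) gives $\dim M_1^*=\dim M_2^*$, whence $\dim(\textnormal{Ker}\sigma_1\times M_1)_f=\dim(\textnormal{Ker}\sigma_1\times M_1)_g$. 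With all hypotheses in place, Proposition \ref{pro4} gives $\sigma_{1f}\cong\sigma_{1g}$, and chaining $\sigma_1\cong\sigma_{1f}\cong\sigma_{1g}\cong\sigma_2$ finishes the argument.

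I expect the main obstacle to be the dimension hypothesis of Proposition \ref{pro4}: one must correctly identify $(\textnormal{Ker}\sigma_1\times M_1)_f$ and $(\textnormal{Ker}\sigma_1\times M_1)_g$ with $M_1^*$ and $M_2^*$ respectively, and then import the equality $\dim M_1^*=\dim M_2^*$ from Lemma \ref{lem11}(2). A secondary point needing care is that isoclinism is symmetric and transitive, so that the isoclinism between $\sigma_1$ and $\sigma_2$ can legitimately be transported to one between the two factor-set extensions $\sigma_{1f}$ and $\sigma_{1g}$ living over the same base.
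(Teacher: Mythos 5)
Your proposal is correct and follows essentially the same route as the paper: reduce both extensions to factor-set extensions $\sigma_{1f}$ and $\sigma_{1g}$ over the common base $\sigma_1$ via Lemma \ref{lem2}, then apply Proposition \ref{pro4}. In fact your verification of the dimension hypothesis (identifying the two factor-set algebras with $M_1^*$ and $M_2^*$ and invoking Lemma \ref{lem11}(2)) is more careful than the paper's, which merely asserts the equality from finite-dimensionality; your explicit checks that $\sigma_{1f}$ is stem and that the isoclinism transports to $\sigma_{1f}\sim\sigma_{1g}$ are likewise steps the paper leaves implicit.
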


\begin{proof}
Let $\sigma_1$ and $\sigma_2$ are isoclinic. Then by lemma \ref{lem2}, there exist factor sets $f,g:(L_1,\alpha_1)\times  (L_1,\alpha_1)\rightarrow\textnormal{Ker}\sigma_1$ such that $\sigma_1\cong \sigma_{1f}$ and $\sigma_2\cong \sigma_{1g}$. Therefore, $ \sigma_{1f}$ and $ \sigma_{1g}$ are isoclinic and since $(\textnormal{Ker}\sigma_1\times M_1)_f$ and  $(\textnormal{Ker}\sigma_1\times M_1)_g$ are finite dimensional, so we can have $\textnormal{dim}(\textnormal{Ker}\sigma_1\times M_1)_f=\textnormal{dim}(\textnormal{Ker}\sigma_1\times M_1)_g$. Hence by Proposition \ref{pro4}, $\sigma_{1f}\cong\sigma_{1g}.$ Therefore, $\sigma_1\cong\sigma_{1f}\cong\sigma_{1g}\cong\sigma_2$ and the result follows.
\end{proof}

\vspace{0.1cm} 

\noindent{\sc Example 3.} Let $\sigma:(M^*,\alpha_*)\rightarrow (L,\alpha_L)$ be a relative central extension of $(M,L)$. Let $T$ be a Hom-Lie subalgebra of $M^*$ such that $M^*=T+\textnormal{Ker}\sigma$ and also $\sigma(T)=M$. Then the map $\sigma_T:T\rightarrow L$ defined by $\sigma_T(t)=\sigma(t)$ for all $t\in T$ is a relative central extension of $(M,L)$ and $(1_L,i):\sigma_T\rightarrow \sigma$ is an isoclinic monomorphism, where $i:T\rightarrow M^*$ is an inclusion map.

\vspace{0.1cm}

The next theorem helps to determine the structure of all the relative central extensions in an isoclinism family of a given relative central extension.

\begin{theorem}\label{thm1}
Let $\mathcal{C}$ be an isoclinism family of finite dimensional relative central extensions. Then $\mathcal{C}$ contains a stem relative central extension $\sigma_1:(M_1^*,\alpha_1^*)\rightarrow (L_1,\alpha_1)$ and relative central extension in $\mathcal{C}$ is the form of $\sigma_1\pi_{M_1^*}:M_1^*\times A\rightarrow L_1$, in which $A$ is a finite dimensional abelian Hom-Lie algebra.
\end{theorem}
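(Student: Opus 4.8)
The plan is to establish the two assertions separately, relying on Corollary~\ref{cor1}, Corollary~\ref{cor2} and Examples 1--2. For the first assertion, I would fix any member $\sigma$ of the family $\mathcal{C}$. By Corollary~\ref{cor1}, $\sigma$ is isoclinic to some stem relative central extension $\sigma_1:(M_1^*,\alpha_1^*)\to(L_1,\alpha_1)$; since $\mathcal{C}$ is an isoclinism class and isoclinism is an equivalence relation, $\sigma_1$ itself lies in $\mathcal{C}$. This produces the required stem member.

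For the structural description I take an arbitrary $\sigma:(M^*,\alpha_*)\to(L,\alpha_L)$ in $\mathcal{C}$ and aim to exhibit a finite dimensional abelian Hom-Lie algebra $A$ with $\sigma\cong\sigma_1\pi_{M_1^*}$. The starting point is that $\textnormal{Ker}\,\sigma\subseteq Z(M^*,L)\cap Z(M^*)$, so $\textnormal{Ker}\,\sigma$ is a central (hence abelian) ideal, finite dimensional as a subspace of $M^*$. I would choose an $\alpha_*$-invariant complement $A$ of $[M^*,L]\cap\textnormal{Ker}\,\sigma$ inside $\textnormal{Ker}\,\sigma$, so that $A\cap[M^*,L]=0$ and $\textnormal{Ker}\,\sigma\subseteq A+[M^*,L]$. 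Then $A$ is an abelian ideal contained in $\textnormal{Ker}\,\sigma$, so by Example 2 the induced map $\bar\sigma:M^*/A\to L$ is a relative central extension isoclinic to $\sigma$, and hence again a member of $\mathcal{C}$. Moreover $\bar\sigma$ is \emph{stem}: using $\textnormal{Ker}\,\sigma\subseteq A+[M^*,L]$ one gets $\textnormal{Ker}\,\bar\sigma=\textnormal{Ker}\,\sigma/A\subseteq (A+[M^*,L])/A=[M^*/A,L]$. As $\bar\sigma$ and $\sigma_1$ are finite dimensional stem extensions in the same family, Corollary~\ref{cor2} gives $\bar\sigma\cong\sigma_1$.

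It then remains to realise $A$ as a direct factor. By the third defining condition of a relative central extension, every bracket satisfies $[m',m]={}^{\sigma(m')}\alpha_*(m)\in[M^*,L]$, so $[M^*,M^*]\subseteq[M^*,L]$ and therefore $A\cap[M^*,M^*]=0$. I would fix an $\alpha_*$-invariant complement $T$ of $A$ in $M^*$ containing $[M^*,M^*]$; then $[T,T]\subseteq[M^*,M^*]\subseteq T$ and $[T,A]=0$ (as $A$ is central), so that $M^*=T\times A$ is a direct product of Hom-Lie algebras with $T\cong M^*/A$. Since $A\subseteq\textnormal{Ker}\,\sigma$, the extension splits as $\sigma=\sigma_T\pi_T$ with $\sigma_T:=\sigma|_T$, and the restriction of the quotient map gives an isomorphism $(1_L,\rho|_T):\sigma_T\to\bar\sigma$. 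Composing with $\bar\sigma\cong\sigma_1$ transports $\sigma_T$ to $\sigma_1$, whence $\sigma=\sigma_T\pi_T\cong\sigma_1\pi_{M_1^*}:M_1^*\times A\to L_1$ with $A$ finite dimensional abelian, as claimed.

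The step I expect to be the genuine obstacle is the invariance of the chosen complements under the twisting map: both $A\subseteq\textnormal{Ker}\,\sigma$ and $T\subseteq M^*$ must be stable under $\alpha_*$ for $A$ to be a Hom-ideal and $T$ a Hom-subalgebra, so that $M^*=T\times A$ is a decomposition of Hom-Lie algebras rather than of mere vector spaces. For a single invertible operator an invariant subspace need not admit an invariant complement, so this splitting must exploit the central location of $\textnormal{Ker}\,\sigma$ (where $\alpha_*$ acts free of bracket interference) together with the multiplicativity $\alpha_*[\,\cdot\,,\cdot\,]=[\alpha_*(\cdot),\alpha_*(\cdot)]$ that keeps $[M^*,M^*]$ invariant. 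Pinning down this $\alpha_*$-equivariance is the crux, whereas the bracket bookkeeping and the appeals to Corollaries~\ref{cor1} and~\ref{cor2} are routine.
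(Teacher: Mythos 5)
Your proposal follows essentially the same route as the paper: produce a stem member of $\mathcal{C}$ via Corollary~\ref{cor1}, choose a complement of the commutator part inside $\textnormal{Ker}\,\sigma$, pass to the quotient to obtain a stem extension identified with $\sigma_1$ by Corollary~\ref{cor2}, and lift the splitting to a decomposition $M^*=T\times A$ realising $\sigma$ as $\sigma_1\pi_{M_1^*}$. The differences are minor --- you complement $\textnormal{Ker}\,\sigma\cap[M^*,L]$ where the paper complements $\textnormal{Ker}\,\sigma\cap(M^*)^2$ --- and the $\alpha_*$-invariance of the chosen complements, which you rightly flag as the delicate point, is passed over in silence by the paper's own proof as well.
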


\begin{proof}
From Corollary \ref{cor1}, it is clear that $\mathcal{C}$ contains a stem relative central extension $\sigma_1$. We know that for any finite dimensional abelian Hom-Lie algebra $A$, the relative central extensions $\sigma_1:M_1^*\rightarrow L_1$ and $\sigma_1\pi_{M_1^*}:M_1^*\times A\rightarrow L_1$ are isoclinic and consequently $\sigma_1\pi_{M_1^*}\in \mathcal{C}$. 

\vspace{0.1cm} 

Now, let $\sigma:M^*\rightarrow L$ be an arbitrary relative central extension in $\mathcal{C}$. Then there exists an ideal $N$ of $\textnormal{Ker}\sigma$ such that $\textnormal{Ker}\sigma=N\times(\textnormal{Ker}\sigma\cap(M^*)^2)$, where $(M^*)^2$ is the derived Hom-Lie subalgebra of $M^*$ and by the proof of Corollary \ref{cor1}, and example 2, the map $\bar{\sigma}:M^*/N\rightarrow L$ is a stem relative central extension in $\mathcal{C}$. If we choose the subspace $T$ of $M^*$ such that $(M^*)^2\subseteq T$ and $T\times N=M^*$, then $T$ is a Hom-Lie subalgebra of $M^*$ with $\sigma(T)=M$ and, $\sigma_1$ and $\bar{\sigma}$ are isoclinic. By Corollary \ref{cor2}, we get $\sigma_1\cong\sigma_T$. Therefore, $\sigma\cong\sigma_T\pi_T\cong\sigma_1\pi{M_1^*}$, where $\sigma_T\pi_T:T\times N\rightarrow L$ and $\sigma_1\pi{M_1^*}:M_1^*\times N\rightarrow L$ are relative central extensions of the pair $(M,L)$ and $N$ is a finite dimensional Hom-Lie algebra.
\end{proof}

\end{document}